\newtheorem{thm}{Theorem}
\newtheorem{cor}[thm]{Corollary}
\newtheorem{lem}[thm]{Lemma}
\newtheorem{prop}[thm]{Proposition}
\theoremstyle{definition}
\newtheorem{defn}[thm]{Definition}
\newtheorem{rem}[thm]{Remark}
\newcommand{\Rd}{\mathbb{R}^{d}}
\def\R2d{{\mathbb R^{2d}}}
\newcommand\Wig{\mathop{\rm Wig}}
\newcommand\Sp{\mathop{\rm Sp}}
\def\supp{{\mathop{\rm supp\,}}}
\begin{document}
\title{\Large\bf Two Aspects of the Donoho-Stark Uncertainty Principle}

\author{{\large Paolo Boggiatto, Evanthia Carypis and Alessandro Oliaro\footnote{E-mail addresses: \newline \hspace*{0.45cm} paolo.boggiatto@unito.it, evanthia.carypis@unito.it, alessandro.oliaro@unito.it}} \\
{\small\it Department of Mathematics, University of Torino} \\
{\small\it Via Carlo Alberto, 10, I-10123 Torino (TO), Italy}}
\date{}

 \maketitle

\begin{abstract}
We present some forms of uncertainty principle which involve in a
new way localization operators, the concept of
$\varepsilon$-concentration and the standard deviation of $L^2$
functions. We show how our results improve the classical
Donoho-Stark estimate in two different aspects: a better general
lower bound and a lower bound in dependence on the signal itself.
\end{abstract}

\section{Introduction}
An uncertainty principle (UP) is an inequality expressing
limitations on the simultaneous concentration of a function, or
distribution, and its Fourier transform. More in general UPs can
express limitations on the concentration of any time-frequency
representation of a signal. According to the meaning given to the
term ``concentration'' many different formulations are possible and,
starting from the classical works of Heisenberg, a vast literature
is today available on these topics, see e.g. \cite{BogCarOli2012,
BogFerGal, BonDemJam, cow, Deg13, DonSta, FolSit97, Gro2003}.

In this paper we are concerned with the Donoho-Stark form of the UP
of which we present an improvement in the form of a new general bound
for the constant which is involved in the estimate, and a new type of
estimation of the same constant in dependence on the signal.

The Donoho-Stark UP relies on the concept of
\emph{$\varepsilon$-concentration} of a function on a measurable set
$U\subseteq \Rd$. We start by recalling this definition followed by
the statement of the classical theorem.

\begin{defn}\label{epscon}
Given $\varepsilon\geqslant 0$, a function $f\in L^{2}(\Rd)$ is
\emph{$\varepsilon$-concentrated} on a measurable set $U\subseteq
\Rd$ if
\[
\left(\int_{\Rd\backslash
U}{|f(x)|^{2}dx}\right)^{1/2}\leqslant\varepsilon\|f\|_{2}.
\]
\end{defn}

\begin{thm}\label{DS}
(Donoho-Stark) Suppose that $f\in L^{2}(\Rd)$, $f\neq 0$, is
$\varepsilon_{T}$-concentrated on $T\subseteq\Rd$, and $\widehat{f}$
is $\varepsilon_{\Omega}$-concentrated on $\Omega\subseteq\Rd$, with
$T,\Omega$ measurable sets in $\Rd$ and
$\varepsilon_T,\varepsilon_\Omega\geqslant 0$, $\varepsilon_T+\varepsilon_\Omega < 1$. Then

\begin{equation}\label{DSineq}
|T||\Omega|\geqslant(1-\varepsilon_T - \varepsilon_{\Omega})^{2}.
\end{equation}
\end{thm}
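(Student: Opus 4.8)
The plan is to encode each concentration hypothesis through the natural orthogonal projection attached to the corresponding set, and then to play a lower bound coming from the hypotheses against an upper bound coming from the sizes $|T|$ and $|\Omega|$. First I would introduce the time-limiting operator $P_T f = \chi_T f$, multiplication by the indicator of $T$, and the band-limiting operator $Q_\Omega f = \mathcal{F}^{-1}(\chi_\Omega \widehat{f})$; both are self-adjoint projections with operator norm at most $1$. By Definition~\ref{epscon}, the $\varepsilon_T$-concentration of $f$ on $T$ is exactly $\|f - P_T f\|_2 \leqslant \varepsilon_T \|f\|_2$, and, since $\|\widehat{g}\|_2 = \|g\|_2$, the $\varepsilon_\Omega$-concentration of $\widehat{f}$ on $\Omega$ becomes $\|f - Q_\Omega f\|_2 \leqslant \varepsilon_\Omega \|f\|_2$.

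The first main step is to show that $Q_\Omega P_T$ reconstructs $f$ up to an error governed by $\varepsilon_T + \varepsilon_\Omega$. Writing $f - Q_\Omega P_T f = (f - Q_\Omega f) + Q_\Omega(f - P_T f)$ and using $\|Q_\Omega\|_{\mathrm{op}} \leqslant 1$, the triangle inequality gives $\|f - Q_\Omega P_T f\|_2 \leqslant (\varepsilon_T + \varepsilon_\Omega)\|f\|_2$; the reverse triangle inequality then yields the lower bound $\|Q_\Omega P_T f\|_2 \geqslant (1 - \varepsilon_T - \varepsilon_\Omega)\|f\|_2$.

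The second main step, which I expect to be the real obstacle, is the matching upper bound $\|Q_\Omega P_T f\|_2 \leqslant (|T||\Omega|)^{1/2}\|f\|_2$. I would obtain it by recognizing $Q_\Omega P_T$ as a Hilbert--Schmidt operator and computing its Hilbert--Schmidt norm, which dominates the operator norm. Unwinding the definitions, its integral kernel is $K(x,y) = \chi_T(y)\,\check{\chi}_\Omega(x-y)$, whence $\|Q_\Omega P_T\|_{\mathrm{HS}}^2 = \int_{\Rd}\!\int_{\Rd} |K(x,y)|^2\,dx\,dy = |T|\,\|\check{\chi}_\Omega\|_2^2 = |T||\Omega|$, the last equality again by Plancherel. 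The delicate points are justifying the interchange of integrals in the kernel computation (handled by first restricting to $f \in L^1 \cap L^2$ and arguing by density) and keeping the Fourier normalization consistent so that Plancherel applies cleanly.

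Finally I would combine the two bounds into $(1 - \varepsilon_T - \varepsilon_\Omega)\|f\|_2 \leqslant \|Q_\Omega P_T f\|_2 \leqslant (|T||\Omega|)^{1/2}\|f\|_2$. Dividing by $\|f\|_2 > 0$ (legitimate since $f \neq 0$) and squaring, which is allowed because $\varepsilon_T + \varepsilon_\Omega < 1$ keeps the left side nonnegative, produces the asserted estimate $|T||\Omega| \geqslant (1 - \varepsilon_T - \varepsilon_\Omega)^2$.
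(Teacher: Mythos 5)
Your proof is correct and is precisely the classical argument for the Donoho--Stark theorem (lower bound on $\|Q_\Omega P_T f\|_2$ from the two concentration hypotheses, upper bound from the Hilbert--Schmidt norm $\|Q_\Omega P_T\|_{\mathrm{HS}}=(|T||\Omega|)^{1/2}$); the paper only recalls this theorem without proof, referring to \cite{DonSta} and \cite{Gro01-1}, where exactly this argument appears. The only point worth adding is the trivial reduction to $|T|,|\Omega|<\infty$ (needed so that $\check{\chi}_\Omega\in L^2$), which is harmless since the conclusion is immediate otherwise.
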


\noindent {\it (We use the convention $\widehat
f(\omega)=\int_{\Rd}e^{-2\pi i x \omega} f(x)\, dx$.) }

Many variations and related information about this result can be
found e.g. in \cite{DonSta} and \cite{Gro01-1}. Our investigations
of the Donoho-Stark UP are based on two well-known fundamental
results, namely the $L^p$ boundedness result of Lieb for the Gabor
transform, recalled in Theorem \ref{Lieb}, and the local UP of
Price, which we recall in Theorem \ref{PriceUP}.

We present now in more details how the two aspects that we have
mentioned are considered in the paper.

The first aspect we deal with is an improvement of the
estimation of the Donoho-Stark constant which appears on the
right-hand side of inequality \eqref{DSineq}. More precisely in
section 2 we prove a new estimate, Lemma \ref{LemmaLocOp}, for the
norm of localization operators:
\begin{equation}\label{locop}
f \longrightarrow L_{\phi,\psi}^{a}f =
\int_{\mathbb{R}^{d}}a(x,\omega) V_{\phi}f(x,\omega)\,
\mu_\omega\tau_x\psi \, dx d\omega
\end{equation}
acting on $L^2(\Rd)$, with symbol $a\in L^q(\R2d)$, for
$q\in[1,\infty]$, and ``window'' functions $\phi,\psi\in L^2(\Rd)$.
Here, for $\phi,\psi\in L^2(\mathbb{R}^d)$, \[ V_\phi f (x,\omega) =
\int e^{-2\pi it\omega} f(t) \overline{\phi(t-x)}\,dt\] is the {\it
Gabor transform} of $f\in L^2(\mathbb{R}^d)$, whereas
$\mu_\omega\tau_x\psi(t)= e^{2\pi i \omega t}\psi(t-x)$ are {\it
time-frequency shifts} of $\psi(t)$. See e.g. \cite{BogOliWon-2006,
Coh95-1, FerGal10, Foll89, Gro01-1} for references on this topic as
well as for extensions to more general functional settings.

Although not new in its functional framework as boundedness result,
as far as we know, the norm estimate of Lemma \ref{LemmaLocOp} does
not appear before in literature. Our proof relies on Lieb's estimate
of the $L^p$ norm of the Gabor transform (Theorem \ref{Lieb}).

\noindent We then use Lemma \ref{LemmaLocOp} and some facts from the
theory of pseudo-differential operators, in particular from Weyl
calculus, to obtain our main result of this section, Theorem
\ref{opL}, which is an uncertainty principle for {\it concentration
operators}, i.e. localization operators with characteristic
functions as symbols.

The reason of the interest in these operators lies in the fact that
the Donoho-Stark hypothesis of $\varepsilon$-concentration can be
interpreted in terms of the action of concentration operators. This
is used in section 3 to compare our results with the classical
Donoho-Stark UP. It turns out that, as a limit case for window
functions tending to the Dirac delta in the space of tempered
distributions, this UP can be reobtained with a considerable
improvement of the constant appearing in the estimate, see
Proposition \ref{ourDS}.

In section 4 we consider the second aspect of the Donoho-Stark UP
starting from Price's local UP, Theorem \ref{PriceUP} (cf,
\cite{Pri87}, see also \cite{BogCarOli14}). Qualitatively this
theorem asserts that a highly concentrated signal $f$ cannot have
Fourier transform which is too concentrated on any measurable set
$\Omega$, the upper bound of the local concentration of $\widehat f$
being given in terms of the Lebesgue measure of $\Omega$ itself and
the standard deviation of $f$. We show that the concept of
$\varepsilon$-concentration of $f$ and $\widehat f$ respectively on
sets $T$ and $\Omega$ can be used in this local context to get a
version of the Donoho-Stark UP with an estimation whose constant
depends on the signal $f$, Theorem \ref{DSfdepending}.

Actually, as pointed out in Remark \ref{strongerresult}, the proof
of Theorem \ref{DSfdepending} shows more than inequalities of
Donoho-Stark type, as we get independent lower bounds for the
measures of the sets $T$ and $\Omega$, whereas the Donoho-Stark
estimate is a lower bound only for the product of the two measures.

Our final result in section 4, Theorem \ref{mixed}, concerns a
``mixed'' lower bound for the support of a signal and the standard
deviation of its Fourier transform.

\section{An uncertainty principle for localization operators}

Localization operators of type \eqref{locop} have been widely
studied in literature (see e.g. \cite{BogOliWon-2006, Gro01-1,
BenHeiWal92, Won02}). As first result of this section, Lemma
\ref{LemmaLocOp}, we obtain a new estimation for their
$L^2$-boundedness constant by means of the classical Lieb's
$L^p$-boundedness result for the Gabor transform which we recall
here for completeness (see e.g. \cite{Lieb, Gro01-1}).

\begin{thm}\label{Lieb}(Lieb)
If $f,g\in L^2(\Rd)$ and $2\leqslant p \leqslant \infty$, then
$$
\|V_gf\|_p\leqslant \left(\frac{2}{p}\right)^{\frac{d}{p}}\|f\|_2\|g\|_2
\hskip.5cm \hbox{(with $(\frac{2}{p})^\frac{d}{p}=1$ for $p=+\infty$)}.
$$
\end{thm}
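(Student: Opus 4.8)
The plan is to use that, for fixed $x$, the map $\omega\mapsto V_gf(x,\omega)$ is a Fourier transform, and then to feed this into the sharp constants of the Hausdorff--Young and Young inequalities. Working first with $f,g$ in the Schwartz class (the general case following by density), write $T_xg(t)=g(t-x)$ and $h_x:=f\cdot\overline{T_xg}$, so that
\[
V_gf(x,\omega)=\int_{\Rd}h_x(t)\,e^{-2\pi it\omega}\,dt=\widehat{h_x}(\omega).
\]
The two endpoints are immediate and already sharp: for $p=\infty$ the Cauchy--Schwarz inequality gives $|V_gf(x,\omega)|\leqslant\|f\|_2\|g\|_2$, i.e.\ the bound with the constant $1$ prescribed for $p=\infty$; for $p=2$ Moyal's identity gives $\|V_gf\|_2=\|f\|_2\|g\|_2$, again the constant $(2/2)^{d/2}=1$. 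The whole difficulty is thus concentrated in the range $2<p<\infty$, where the target constant $(2/p)^{d/p}$ is strictly smaller than $1$; in particular a plain interpolation between the endpoints is useless, since it would only reproduce the constant $1$.

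Next I would apply the \emph{sharp} Hausdorff--Young (Babenko--Beckner) inequality to $\widehat{h_x}$ in the variable $\omega$, for each fixed $x$. Denoting by $p'$ the conjugate exponent (so $1\leqslant p'\leqslant2$) and by $B_{p'}=\big((p')^{1/p'}/p^{1/p}\big)^{1/2}$ the one-dimensional Babenko--Beckner constant, this reads $\|\widehat{h_x}\|_p\leqslant B_{p'}^{\,d}\|h_x\|_{p'}$. Raising to the $p$-th power and integrating in $x$ gives
\[
\|V_gf\|_p^p=\int_{\Rd}\|\widehat{h_x}\|_p^{\,p}\,dx\leqslant B_{p'}^{\,dp}\int_{\Rd}\Big(\int_{\Rd}|f(t)|^{p'}|g(t-x)|^{p'}\,dt\Big)^{p/p'}dx.
\]
The inner integral is the convolution of $|f|^{p'}$ with the reflection of $|g|^{p'}$, evaluated at $x$; hence the right-hand side equals $B_{p'}^{\,dp}$ times the $p/p'$-th power of the $L^{p/p'}$-norm of this convolution.

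It then remains to control that convolution by the \emph{sharp} Young inequality. Putting $\Phi=|f|^{p'}$ and $\Psi=|g|^{p'}$ and choosing the exponents $a=b=2/p'$ with target $r=p/p'$, the arithmetic is consistent, $\tfrac1a+\tfrac1b=p'=1+\tfrac1r$, and moreover $\|\Phi\|_{2/p'}=\|f\|_2^{p'}$, $\|\Psi\|_{2/p'}=\|g\|_2^{p'}$. The sharp Young inequality then yields a bound $C_Y^{\,d}\|f\|_2^{p'}\|g\|_2^{p'}$, where $C_Y=C_aC_bC_{r'}$ is the product of the Beckner constants $C_s=\big(s^{1/s}/(s')^{1/s'}\big)^{1/2}$. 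Combining with the previous display gives
\[
\|V_gf\|_p\leqslant B_{p'}^{\,d}\,C_Y^{\,d/p'}\,\|f\|_2\|g\|_2.
\]

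The main obstacle, and really the only non-routine step, is the final constant bookkeeping: one has to verify that $B_{p'}^{\,d}C_Y^{\,d/p'}$ simplifies to exactly $(2/p)^{d/p}$. Since every constant above factorizes as a $d$-th power, it suffices to check the scalar identity $B_{p'}\,C_Y^{1/p'}=(2/p)^{1/p}$ in dimension one. This is a matter of simplifying the Babenko--Beckner and Beckner--Young exponentials; as a sanity check, for $p=4$ one gets $a=b=r'=3/2$ and both sides equal $2^{-1/4}$. Once this identity is confirmed the theorem follows, the genuine analytic content being entirely supplied by the two sharp inequalities invoked above.
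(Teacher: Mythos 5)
The paper itself contains no proof of this statement: Theorem \ref{Lieb} is Lieb's inequality, quoted from the literature (the citations are to Lieb's paper and to Gr\"ochenig's book), so there is nothing internal to compare against. Your argument is essentially Lieb's original proof: for fixed $x$ one views $V_gf(x,\cdot)$ as the Fourier transform of $h_x=f\cdot\overline{T_xg}$, applies the sharp Hausdorff--Young (Babenko--Beckner) inequality in the $\omega$ variable, and then the sharp Young inequality to the convolution $|f|^{p'}\ast|\widetilde g|^{p'}$ in the $x$ variable. Your exponent bookkeeping is correct ($a=b=2/p'$ and $r=p/p'$ do satisfy $1/a+1/b=1+1/r$, with $a,b,r\geqslant 1$ precisely because $p\geqslant 2$), the endpoints $p=2$ (Moyal) and $p=\infty$ (Cauchy--Schwarz) are handled correctly, and the reduction to Schwartz functions is harmless: if $f_n\to f$ and $g_n\to g$ in $L^2$ then $V_{g_n}f_n\to V_gf$ uniformly, and Fatou's lemma transfers the bound.

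The one step you assert without proving is the identity $B_{p'}\,C_Y^{1/p'}=(2/p)^{1/p}$; checking it at $p=4$ alone does not close the argument, and since all the analytic content has been delegated to the two sharp inequalities, this identity is the entire remaining content of the proof. It does hold in general. With $q=p'=p/(p-1)$ one has $a=b=2/q$, $r=p/q=p-1$, $r'=(p-1)/(p-2)$, and taking logarithms of the Beckner constants $C_s=\bigl(s^{1/s}/(s')^{1/s'}\bigr)^{1/2}$ gives
\begin{align*}
\ln C_a^2 &= \frac{1}{2(p-1)}\bigl[2\ln\bigl(2(p-1)\bigr)-p\ln p+(p-2)\ln(p-2)\bigr],\\
\ln C_{r'} &= \frac{1}{2(p-1)}\bigl[(p-3)\ln(p-1)-(p-2)\ln(p-2)\bigr],
\end{align*}
so that
\begin{equation*}
\ln C_Y=\ln C_a^2+\ln C_{r'}=\frac{1}{2(p-1)}\bigl[2\ln 2+(p-1)\ln(p-1)-p\ln p\bigr],
\end{equation*}
while
\begin{equation*}
\ln B_{p'}=\frac12\Bigl(\frac1q\ln q-\frac1p\ln p\Bigr)=\frac{1}{2p}\bigl[(p-2)\ln p-(p-1)\ln(p-1)\bigr].
\end{equation*}
Since $1/q=(p-1)/p$, summing the last two displays yields
\begin{equation*}
\ln\bigl(B_{p'}C_Y^{1/p'}\bigr)=\frac{1}{2p}\bigl[2\ln 2-2\ln p\bigr]=\frac1p\ln\frac2p,
\end{equation*}
which is exactly the required identity. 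With this computation inserted, your proof is complete and coincides with the standard (and original) proof of Lieb's theorem.
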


\begin{lem}\label{LemmaLocOp}
Let $\phi, \psi \in L^{2}(\Rd)$, $q\in [1,\infty]$ and consider the
quantization (see \eqref{locop}):
$$
L_{\phi,\psi}: a\in L^{q}(\mathbb{R}^{2d})\rightarrow
L_{\phi,\psi}^{a}(\mathbb{R}^{2d})\in B(L^2(\mathbb{R}^{2d})).
$$
Then the following estimation holds
\[
\|L_{\phi,\psi}^{a}\|_{B(L^{2})}\leqslant
\left(\frac{1}{q'}\right)^{d/q'}\|\phi\|_{2}\|\psi\|_{2}\|a\|_{q},
\]
with $\frac{1}{q}+\frac{1}{q'}=1$ and setting
$(\frac{1}{q'})^\frac{1}{q'}=1$ for $q=1$.
\end{lem}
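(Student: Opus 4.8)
The plan is to work with the sesquilinear form associated to $L_{\phi,\psi}^a$ and to estimate its operator norm by duality, so that $\|L_{\phi,\psi}^a\|_{B(L^2)}=\sup|\langle L_{\phi,\psi}^a f,g\rangle|$, the supremum being taken over unit vectors $f,g\in L^2(\Rd)$. First I would pair the defining integral \eqref{locop} against a test function $g$ and bring the inner product inside the integral, obtaining
\[
\langle L_{\phi,\psi}^a f,g\rangle=\int_{\R2d}a(x,\omega)\,V_\phi f(x,\omega)\,\overline{V_\psi g(x,\omega)}\,dx\,d\omega,
\]
after observing that $\langle\mu_\omega\tau_x\psi,g\rangle=\overline{V_\psi g(x,\omega)}$ directly from the definition of the Gabor transform. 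The absolute convergence of this integral, which at the same time legitimizes the weak definition of the operator, will follow a posteriori from the very bound we are about to establish.

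The core of the estimate is then a two-step application of H\"older's inequality followed by Lieb's theorem. I would first split off the symbol: by H\"older with conjugate exponents $q$ and $q'$,
\[
|\langle L_{\phi,\psi}^a f,g\rangle|\leqslant\|a\|_q\,\bigl\|V_\phi f\cdot\overline{V_\psi g}\bigr\|_{q'}.
\]
To treat the remaining factor I would apply H\"older once more to the product $V_\phi f\cdot\overline{V_\psi g}$, and the decisive choice is the symmetric one: taking exponents $r=s=2q'$, which satisfy $\frac{1}{2q'}+\frac{1}{2q'}=\frac{1}{q'}$, gives $\bigl\|V_\phi f\cdot\overline{V_\psi g}\bigr\|_{q'}\leqslant\|V_\phi f\|_{2q'}\|V_\psi g\|_{2q'}$. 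Since $q'\geqslant 1$ forces $2q'\geqslant 2$, Theorem \ref{Lieb} applies to each factor with $p=2q'$ and produces the constant $(2/(2q'))^{d/(2q')}=(1/q')^{d/(2q')}$, so that the product of the two Gabor-transform norms contributes exactly $(1/q')^{d/q'}\|\phi\|_2\|\psi\|_2\|f\|_2\|g\|_2$. Combining the two inequalities and taking the supremum over unit vectors $f,g$ yields the claimed bound, with the endpoint conventions ($q=1$ and $q=\infty$) matching the stated values of $(1/q')^{d/q'}$ once one uses Lieb's endpoint constant $1$ at $p=\infty$.

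The step I expect to require the most care is the choice of exponents in the second H\"older splitting, since this is precisely what decides whether the constant comes out as the sharp $(1/q')^{d/q'}$ rather than something larger. I would justify that the symmetric split $r=s=2q'$ is optimal by noting that minimizing the product of Lieb constants $(2/r)^{d/r}(2/s)^{d/s}$ under the constraint $\frac1r+\frac1s=\frac1{q'}$ reduces, after taking logarithms, to minimizing a sum of the convex function $u\mapsto d\,u\ln(2u)$ evaluated at $u=1/r$ and $v=1/s$ with $u+v$ fixed, whose minimum over the feasible range is attained at $u=v$. The remaining points, namely the absolute convergence of the form and the two boundary cases, are then routine and are handled by the stated conventions.
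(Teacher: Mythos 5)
Your proof is correct and follows essentially the same route as the paper's: the weak identity $(L^a_{\phi,\psi}f,g)=(a,V_\psi g\,\overline{V_\phi f})$, H\"older's inequality with exponents $q,q'$, a second H\"older split of the product of Gabor transforms, and Lieb's theorem applied to each factor. The only difference is presentational: the paper keeps the splitting exponent as a free parameter $k$ and minimizes the resulting constant $\alpha_k=(1/k)^{1/k}(1/k')^{1/k'}$ over the admissible range to find $k=2$, whereas you choose the symmetric split $2q',2q'$ outright and justify its optimality by the same convexity consideration.
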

\begin{proof}
We indicate by $(\cdot,\cdot)$ the inner product in $L^2$ spaces. Recall (cf. for example \cite{BogOliWon-2006}) that for every $f,g,\phi,\psi\in L^2(\mathbb{R}^d)$ and $a\in L^q(\mathbb{R}^{2d})$ we have $(L^a_{\phi,\psi}f,g) = (a,V_\psi g\overline{V_\phi f})$. Then, in view of H\"older's inequality and Lieb's UP, for $f,g \in L^{2}(\Rd)$ we have:
\begin{equation}
\begin{split}
&|(L_{\phi,\psi}^{a}f,g)| = |(a, V_{\psi}g\overline{V_{\phi}f})|\nonumber\\
&\leqslant \|a\|_{q}\|V_{\psi}g\overline{V_{\phi}f}\|_{q'}\nonumber\\
&\leqslant\|a\|_{q}\|V_{\psi}g\|_{q'k}\|V_{\phi}f\|_{q'k'}\quad\qquad \left(\text{for}\, k\in[1,\infty],\,\frac{1}{k} + \frac{1}{k'}=1\right)\nonumber\\
&\leqslant\|a\|_{q}\left(\frac{2}{q'k}\right)^{d/q'k}\|\psi\|_{2}\|g\|_{2}\left(\frac{2}{q'k'}\right)^{d/q'k'}\|\phi\|_{2}\|f\|_{2} \nonumber\\
&=\|a\|_{q}\left(\frac{2}{q'}\right)^{\left(\frac{1}{k}+\frac{1}{k'}\right)\frac{d}{q'}}\left(\left(\frac{1}{k}\right)^{1/k}\left(\frac{1}{k'}\right)^{1/k'}\right)^{d/q'}\|\psi\|_{2}\|\phi\|_{2}\|f\|_{2}\|g\|_{2}\nonumber\\
&=\|a\|_{q}\left(\frac{2}{q'}\right)^{d/q'}\alpha_{k}^{d/q'}\|\psi\|_{2}\|\phi\|_{2}\|f\|_{2}\|g\|_{2},\nonumber\\
\end{split}
\end{equation}
where we have set
\[
\alpha_k =\left(\frac{1}{k}\right)^{1/k}\left(\frac{1}{k'}\right)^{1/k'},
\]
and we have applied Theorem \ref{Lieb}, supposing $k\in[1,\infty]$ such that
$q'k\geqslant 2$ and $q'k'\geqslant 2$. It follows that
\[
\|L_{\phi,\psi}^{a}\|_{B(L^{2})}\leqslant C\|a\|_{q},
\]
with
\[
C = \left(\frac{2}{q'}\right)^{d/q'}\|\psi\|_{2}\|\|\phi\|_{2}\left(\min\{\alpha_{k}: k\in[1,\infty] \,\text{such that}\, q'k\geqslant 2,q'k'\geqslant 2\}\right)^{d/q'}.
\]
Let us now study the function
\[
\alpha_{k} = \left(\frac{1}{k}\right)^{1/k}\left(\frac{1}{k'}\right)^{1/k'} = f\left(\frac{1}{k}\right).
\]
Setting $x= \frac{1}{k}$, we have $f(x)= x^{x}(1-x)^{(1-x)}$, which,
for $x\in [0,1]$ and setting
$\left(\frac{1}{\infty}\right)^{\frac{1}{\infty}} = 1$, has an
absolute minimum in $x = 1/2$. It follows that $f(1/k)$ on the
interval $k\in[1,\infty]$ has an absolute minimum in $k = 2$.

\noindent Given $q\in [1,\infty]$, we search now the minimum of the values $\alpha_{k}$ with $k$ satisfying the conditions:
\begin{equation}\label{conditions}
\left\{\begin{array}{ll}
1\leqslant k \leqslant \infty &\\
q'k\geqslant 2 &\\
q'k'\geqslant 2&
\end{array}\right.
\end{equation}
For $k\in[1,\infty]$, the condition $q'k\geqslant 2$ yields
$k\geqslant \frac{2q-2}{q}$; whereas the condition $q'k'\geqslant 2$
yields $k\leqslant \frac{2q-2}{q-2}$ for $q > 2$, and $k\geqslant
\frac{2q-2}{q-2}$, for $q < 2$ (observe that $q'k'\geqslant 2$ is satisfied for every $k\in [1,\infty]$ when $q=2$). Elementary considerations
lead then to the conclusion that, for every $q\in[1,\infty]$, the
value $k=2$ satisfies \eqref{conditions} and gives the
absolute minimum for $\alpha_k$.

\noindent As
\[
\alpha_{2} = \left(\frac{1}{2}\right)^{1/2}\left(\frac{1}{2}\right)^{1/2} = \frac{1}{2},
\]
we have
\begin{equation}
\begin{split}
\|L_{\phi, \psi}^{a}\|_{B(L^{2})}&\leqslant\left(\frac{2}{q'}\right)^{d/q'}2^{-d/q'}\|\phi\|_{2}\|\psi\|_{2}\|a\|_{q}\nonumber\\
&=\left(\frac{1}{q'}\right)^{d/q'}\|\phi\|_{2}\|\psi\|_{2}\|a\|_{q}
\end{split}
\end{equation}
as desired.
\end{proof}

We shall use the previous result to obtain an uncertainty principle
involving localization operators in the special case where the
symbol is the characteristic function of a set, expressing
therefore {\sl concentration} of energy on this set when applied to
signals in $L^2(\Rd)$. In this case they are also known as
{\sl concentration operators}.

For the proof we shall need some tools from the pseudo-differential
theory which we now recall in the $L^2$ functional framework, for
more general settings and reference see e.g. \cite{BogDedOli2010},
\cite{Hor90}, \cite{Tof04-1},  \cite{Tof04-2}, \cite{Won14}.

\begin{defn}
The Wigner transform is the sesquilinear bounded map from
$L^2(\Rd)\times L^2(\Rd)$ to $L^2(\R2d)$ defined by
\begin{equation*}
(f,g) \longrightarrow \Wig(f,g)(x,\omega)=\int_{\Rd}e^{-2\pi i t
\omega}f(x+t/2)\overline{g(x-t/2)}\,dt.
\end{equation*}
For short we shall write $\Wig(f)$ instead of $\Wig(f,f)$. In
connection with the Wigner transform, Weyl pseudo-differential
operators are defined by the formula
\begin{equation}\label{WeylWig}
(W^af,g)_{L^2(\Rd)}=(a,\Wig(g,f))_{L^2(\R2d)},
\end{equation}
for $f,g\in L^2(\mathbb{R}^d)$, $a\in L^2(\mathbb{R}^{2d})$.
More explicitly they are maps of the type
$$
f\in L^2(\Rd)\longrightarrow W^a f(x)= \int_{\R2d} e^{2\pi i
(x-y)\omega} a\left(\frac{x+y}{2},\omega\right) f(y)\, dy\,
d\omega\in L^2(\Rd).
$$
\end{defn}

\noindent The fundamental connection between Weyl and localization
operators is expressed by the formula which yields localization operators in terms of Weyl operators:
\begin{equation}\label{locWeyl}
L^a_{\phi,\psi}=W^b, \ \ \ \ \ \ \ {\rm with} \ \
b=a*\Wig(\widetilde\psi,\widetilde\phi),
\end{equation}
\noindent with $\psi,\phi \in L^2(\mathbb R^d)$ and where, for a generic function $u(x)$, we use the notation $\widetilde u(x)=u(-x)$.

\noindent Of particular importance for our purpose will be the fact
that Weyl operators with symbols $a(x,\omega)$ depending only on
$x$, or only on $\omega$, are multiplication operators, or Fourier
multiplier respectively. More precisely we have

\begin{equation}\label{multop-Fmultop}
\begin{array}{ll}
a(x,\omega)=a(x) \ \ \ \Longrightarrow \ \ \ W^af(x)=a(x)f(x) \\
a(x,\omega)=a(\omega) \ \ \ \Longrightarrow \ \ \ W^af(x)= \mathcal
F^{-1}[a(\omega) \widehat f(\omega)](x).
\end{array}
\end{equation}

Let us now fix some notations. Let $T\subseteq \mathbb{R}_{x}^{d}$,
$\Omega\subseteq\mathbb{R}_{\omega}^{d}$ be measurable sets, and write for shortness $\chi_T=\chi_{T\times\mathbb{R}^d}$ and $\chi_\Omega=\chi_{\mathbb{R}^d\times\Omega}$, in such a way that $\chi_T=\chi_T(x)$ and $\chi_\Omega=\chi_\Omega(\omega)$. Moreover, for $j = 1, 2$, $\lambda_{j}>0$, we set $\phi_{j}(x) =
e^{-\pi\lambda_{j}x^{2}}$ and $\Phi_{j}(x) = c_{j}\phi_{j}(x)$,
where $\Phi_{j}$ are normalized in $L^{2}(\Rd)$, i.e. $c_j =
(2\lambda_j)^{d/4}$. Furthermore let
\begin{equation}\label{op1}
L_{1}f = L_{\Phi_{1}}^{\chi_{T}}f =
\int_{\mathbb{R}^{2d}}{\chi_{T}(x)V_{\Phi_{1}}f(x,\omega)\mu_{\omega}\tau_{x}\Phi_{1}dxd\omega}
\end{equation}
and
\begin{equation}\label{op2}
L_{2}f = L_{\Phi_{2}}^{\chi_{\Omega}}f =
\int_{\mathbb{R}^{2d}}{\chi_{\Omega}(\omega)V_{\Phi_{2}}f(x,\omega)\mu_{\omega}\tau_{x}\Phi_{2}dxd\omega}
\end{equation}
be the two localization operators with symbols $\chi_T, \chi_\Omega$
and windows $\Phi_1, \Phi_2$ respectively. We can state now the main
result of this section which is an UP involving the
$\varepsilon$-concentration of these two localization operators.

\begin{thm}\label{opL}
With the previous assumptions on $T$, $\Omega$, $L_1$, $L_2$,
suppose that $\varepsilon_{T},\varepsilon_{\Omega}>0$,
$\varepsilon_T+\varepsilon_\Omega \leqslant 1$, and that $f\in
L^{2}(\mathbb{R}^{d})$ is such that
\begin{equation}\label{Hp}
\|L_{1}f\|_{2}^{2}\geqslant(1-\varepsilon_{T}^{2})\|f\|_{2}^{2}
\quad
\text{and}\quad\|L_{2}f\|_{2}^{2}\geqslant(1-\varepsilon_{\Omega}^{2})\|f\|_{2}^{2}.
\end{equation}
Then
\begin{equation}\label{improv}
|T||\Omega|\geqslant \sup_{r\in [1,\infty)}
(1-\varepsilon_{T}-\varepsilon_{\Omega})^{r}
\left(\frac{r}{r-1}\right)^{2d(r-1)}.
\end{equation}

\begin{proof}
Writing the operators $L_{j}$, $j=1,2$, defined in \eqref{op1} and
\eqref{op2} as Weyl operators we have:
\[
L_{1}f = W^{F_1}f, \quad \text{with}\quad F_1(x,\omega) = \left(\chi_{T}(x)\otimes 1_{\omega}\right)\ast \Wig{}{}(\Phi_{1})(x,\omega)
\]
\[
L_{2}f = W^{F_2}f, \quad \text{with}\quad F_2(x,\omega) =
\left(1_x\otimes \chi_{\Omega}(\omega)\right)\ast
\Wig{}{}(\Phi_{2})(x,\omega).
\]
An explicit calculation yields:
\[
Wig(\Phi_{j})(x,\omega) = c_{j}^{2}\left(\frac{2}{\lambda_{j}}\right)^{d/2}e^{-2\pi\lambda_{j}x^2}e^{-\pi\frac{2}{\lambda_{j}}\omega^{2}}, \quad j = 1,2,
\]
therefore we have
\begin{equation}
\begin{split}
F_1(x,\omega) &= c_{1}^{2}\left(\int{\left(\frac{2}{\lambda_{1}}\right)^{d/2}e^{-2\pi \lambda_{1}t^{2}}\chi_{T}(x-t)dt}\right)\left(\int{e^{-\pi \frac{2}{\lambda_{1}}s^{2}}ds}\right)\nonumber\\
&=c_{1}^{2}\int{\chi_{T}(x-t)e^{-2\pi\lambda_{1}t^{2}}dt}\nonumber\\
&=c_{1}^{2}\left(\chi_{T}\ast e^{-2\pi \lambda_{1}(\cdot)^2}\right)(x)\nonumber
\end{split}
\end{equation}
which shows that $F_1$ depends only on the time variable $x$. In a
similar way we can prove that $F_2$ depends just on $\omega$.
Precisely we have:
\begin{equation}
\begin{split}
F_2(x,\omega) &= c_{2}^{2}\left(\int{\left(\frac{2}{\lambda_{2}}\right)^{d/2}e^{-2\pi \lambda_{2}t^{2}}dt}\right)\left(\int{\chi_{\Omega}(\omega-s)e^{-\pi \frac{2}{\lambda_{2}}s^{2}}ds}\right)\nonumber\\
&=c_{2}^{2}\left(\frac{2}{\lambda_{2}}\right)^{d/2}(2\lambda_{2})^{-d/2}\left(\chi_{\Omega}\ast e^{-\pi\frac{2}{\lambda_{2}}(\cdot)^{2}}\right)(\omega)\nonumber\\
&=c_{2}^{2}\lambda_{2}^{-d}\left(\chi_{\Omega}\ast e^{-\pi
\frac{2}{\lambda_{2}}(\cdot)^2}\right)(\omega).\nonumber
\end{split}
\end{equation}
It follows that
\[
L_{1}f = W^{F_1}f = F_1f,
\]
i.e. $L_1$ is the multiplication operator by the function $F_1$ and
\[
L_{2}f = W^{F_2}f = \mathcal{F}^{-1}F_2\mathcal{F}f,
\]
i.e. $L_2$ is the Fourier multiplier with symbol $F_2$. Now, for $j = 1, 2$, we compute
\begin{equation}
\begin{split}
\|f\|_{2}^{2} &= \|(f - L_{j}f) + L_{j}f\|_{2}^{2}\\
&=((f-L_{j}f) + L_{j}f,(f - L_{j}f) + L_{j}f)\\
&=\|f- L_{j}f\|_{2}^{2} + \|L_{j}f\|_{2}^{2} + (f - L_{j}f,L_{j}f) + (L_{j}f, f-L_{j}f) \quad \label{ripre}
\end{split}
\end{equation}
Next we show that $(f - L_{j}f,L_j f)\geqslant 0$ if $\Phi_{j}$ are normalized in $L^{2}$. For $j = 1$ we have
\begin{equation}
\begin{split}
(f - L_{1}f, L_{1}f) &= (f,L_{1}f) - (L_{1}f,L_{1}f)\nonumber\\
&=\int{f\overline{F_1}\overline{f}}-\int{F_1f\overline{F_1}\overline{f}}\nonumber\\
&=\int{(1-F_1)\overline{F_1}|f|^{2}}\geqslant 0,
\end{split}
\end{equation}
as $F_1$ is real, non negative, and $\|F_1\|_{\infty}\leqslant 1$;
actually
\begin{equation}
\begin{split}
\|F_1\|_{\infty} &= c_{1}^{2}\|\chi_{T}\ast e^{-2\pi \lambda_{1}t^{2}}\|_{\infty}\nonumber\\
&\leqslant c_{1}^{2}\|\chi_{T}\|_{\infty}\|e^{-2\pi \lambda_{1}t^{2}}\|_{1} \nonumber\\
&= c_{1}^{2}(2\lambda_{1})^{-d/2}\nonumber\\
&=1,
\end{split}
\end{equation}
recalling that $c_{1} = (2\lambda_{1})^{d/4} = \|\phi_1\|_{2}^{-1}$.

Analogously, if $j = 2$ we have
\begin{equation}
\begin{split}
(f - L_{2}f, L_{2}f) &= (f,L_{2}f) - (L_{2}f,L_{2}f)\nonumber\\
&=\left(f,\mathcal{F}^{-1}F_2\mathcal{F}f\right)-\left(\mathcal{F}^{-1}F_2\mathcal{F}f,\mathcal{F}^{-1}F_2\mathcal{F}f\right)\nonumber\\
&=(\widehat{f},F_2\widehat{f}) - (F_2\widehat{f},F_2\widehat{f}) \nonumber\\
&=\int{\widehat{f}\overline{F_2\widehat{f}}} - \int{F_2\widehat{f}\overline{F_2\widehat{f}}}\nonumber\\
&=\int{(1-F_2)\overline{F_2}|\widehat{f}|^{2}}\geqslant 0,
\end{split}
\end{equation}
as $F_2$ is real, non negative, and $\|F_2\|_{\infty}\leqslant 1$,
the last inequality following from
\begin{equation}
\begin{split}
\|F_2\|_{\infty} &= c_{2}^{2}\lambda_{2}^{-d}\|\chi_{\Omega}\ast e^{-\pi \frac{2}{\lambda_{2}}s^{2}}\|_{\infty}\nonumber\\
&\leqslant c_{2}^{2}\lambda_{2}^{-d}\|\chi_{\Omega}\|_{\infty}\|e^{-\pi \frac{2}{\lambda_{2}}s^{2}}\|_{1}   \nonumber\\
&= c_{2}^{2}\lambda_{2}^{-d}\left(\frac{2}{\lambda_{2}}\right)^{-d/2}\nonumber\\
&=1,
\end{split}
\end{equation}
as $c_{2} = (2\lambda_{2})^{d/4} = \|\phi_2\|_{2}^{-1}$.

\noindent Now, from \eqref{ripre}, since $(f-L_{j}f, L_{j}f)\geqslant 0$, it follows
\[
\|f\|_2^{2} = \|f-L_{j}f\|_2^{2}+\|L_{j}f\|_2^{2} + 2(f-L_{j}f,
L_{j}f)
\]
and hence \begin{equation}\label{estLj} \|f-L_{j}f\|_2^{2} \leqslant
\|f\|_2^{2}-\|L_{j}f\|_2^{2}.
\end{equation}
From the hypothesis and \eqref{estLj} we obtain
\begin{equation*}
\left\{
  \begin{array}{ll}
    \|f-L_{1}f\|_2^{2}\leqslant \varepsilon_{T}^{2}\|f\|_2^{2}&,\\
    \|f-L_{2}f\|_2^{2}\leqslant
\varepsilon_{\Omega}^{2}\|f\|_2^{2}&.
  \end{array}
\right.
\end{equation*}
Considering the composition of $L_{1}$ and $L_{2}$ we have
\begin{equation}
\begin{split}
\|f-L_{2}L_{1}f\|_{2}&\leqslant\|f-L_{2}f\|_{2}+\|L_{2}f-L_{2}L_{1}f\|_{2}\nonumber\\
&\leqslant\varepsilon_{\Omega}\|f\|_{2} + \|L_{2}\|\|f-L_{1}f\|_{2}\nonumber\\
&\leqslant\varepsilon_{\Omega}\|f\|_{2}+ 1\cdot\varepsilon_{T}\|f\|_{2}\nonumber\\
&= (\varepsilon_{\Omega}+ \varepsilon_{T})\|f\|_{2},
\end{split}
\end{equation}
where Lemma \ref{LemmaLocOp} has been used with $q = \infty$ in the estimation of the operator norm $\|L_2\|_{B(L^{2})}\leqslant\|\Phi_2\|_{2}^{2}\|\chi_{\Omega}\|_{\infty} = 1$. Then
\begin{equation}
\begin{split}
\|L_{1}L_{2}f\|_2&\geqslant\|f\|_{2} - \|f-L_{2}L_{1}f\|_{2}\nonumber\\
&\geqslant\|f\|_{2} - (\varepsilon_{\Omega}+ \varepsilon_{T})\|f\|_{2}\nonumber\\
&=(1-\varepsilon_{T}-\varepsilon_{\Omega})\|f\|_{2},
\end{split}
\end{equation}
and, from this, it follows that for every $r\in [1,\infty)$
\begin{equation}
\begin{split}
1-\varepsilon_{\Omega} - \varepsilon_{T}&\leqslant\frac{\|L_{1}L_{2}f\|_{2}}{\|f\|_{2}}\nonumber\\
&\leqslant\|L_{1}L_{2}\|\nonumber\\
&\leqslant\|L_{1}\|\|L_{2}\|\nonumber\\
&\leqslant\|\chi_{T}\|_{r}\|\chi_{\Omega}\|_{r}\left(\frac{1}{r'}\right)^{2d/r'}\|\Phi_{1}\|_{2}^{2}\|\Phi_{2}\|_{2}^{2}\nonumber\\
&= \left(\int_{T}{dt}\right)^{1/r}\left(\int_{\Omega}{ds}\right)^{1/r}\left(\frac{1}{r'}\right)^{2d/r'},
\end{split}
\end{equation}
where we have used again Lemma \ref{LemmaLocOp} with $q = r<+\infty$ in
order to have norms involving the measures of the sets $T$ and
$\Omega$. Hence, we finally have that
\begin{equation*}
|T||\Omega|\geqslant \sup_{r\in[1,\infty)}
(1-\varepsilon_{T}-\varepsilon_{\Omega})^{r}(r')^{\frac{r}{r'}2d}.
\end{equation*}
which proves the thesis.
\end{proof}
\end{thm}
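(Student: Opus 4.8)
The plan is to use the Gaussian windows to collapse $L_1$ and $L_2$ into a multiplication operator and a Fourier multiplier, respectively, and then to run an energy-splitting argument whose constants are controlled by Lemma \ref{LemmaLocOp}. First I would pass from localization to Weyl form via \eqref{locWeyl}, writing $L_j=W^{F_j}$ with $F_j=(\text{symbol})\ast\Wig(\Phi_j)$. The point is that $\Wig(\Phi_j)$ factors as a Gaussian in $x$ times a Gaussian in $\omega$; convolving against $\chi_T(x)\otimes 1_\omega$ (resp. $1_x\otimes\chi_\Omega(\omega)$) lets the Gaussian in the transverse variable integrate to a constant, and the normalization $c_j=(2\lambda_j)^{d/4}$ is exactly what makes that constant equal to $1$. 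Hence $F_1=F_1(x)$ and $F_2=F_2(\omega)$, so by \eqref{multop-Fmultop} $L_1$ is multiplication by $F_1$ and $L_2=\mathcal F^{-1}F_2\mathcal F$. A one-line Young/$L^1$ estimate on the Gaussian convolution then gives that $F_j$ is real, nonnegative, and $\|F_j\|_\infty\leqslant 1$.

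Next I would convert the hypotheses \eqref{Hp} into approximation bounds. Expanding $\|f\|_2^2=\|f-L_jf\|_2^2+\|L_jf\|_2^2+2\,\mathrm{Re}(f-L_jf,L_jf)$ and computing the cross term directly — it equals $\int(1-F_1)F_1|f|^2$ for $j=1$ and the analogous frequency-side integral $\int(1-F_2)F_2|\widehat f|^2$ for $j=2$ — the bound $0\leqslant F_j\leqslant1$ forces this term to be nonnegative. Therefore $\|f-L_jf\|_2^2\leqslant\|f\|_2^2-\|L_jf\|_2^2$, and \eqref{Hp} yields $\|f-L_1f\|_2\leqslant\varepsilon_T\|f\|_2$ and $\|f-L_2f\|_2\leqslant\varepsilon_\Omega\|f\|_2$. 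I would then compose: by the triangle inequality together with $\|L_2\|\leqslant1$ (Lemma \ref{LemmaLocOp} with $q=\infty$, since $\|\Phi_2\|_2=1$ and $\|\chi_\Omega\|_\infty=1$), one gets $\|f-L_2L_1f\|_2\leqslant\|f-L_2f\|_2+\|L_2\|\,\|f-L_1f\|_2\leqslant(\varepsilon_T+\varepsilon_\Omega)\|f\|_2$, hence $\|L_2L_1f\|_2\geqslant(1-\varepsilon_T-\varepsilon_\Omega)\|f\|_2$ and so $1-\varepsilon_T-\varepsilon_\Omega\leqslant\|L_2L_1\|\leqslant\|L_1\|\,\|L_2\|$.

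The last step introduces the free exponent. Applying Lemma \ref{LemmaLocOp} with $q=r\in[1,\infty)$ to each factor (again using $\|\Phi_j\|_2=1$) produces $\|L_1\|\,\|L_2\|\leqslant(1/r')^{2d/r'}|T|^{1/r}|\Omega|^{1/r}$. Substituting, raising to the power $r$, and using $r/r'=r-1$ together with $r'=r/(r-1)$ to rewrite $(r')^{2dr/r'}=(r/(r-1))^{2d(r-1)}$, I obtain $|T||\Omega|\geqslant(1-\varepsilon_T-\varepsilon_\Omega)^r(r/(r-1))^{2d(r-1)}$ for every admissible $r$; taking the supremum over $r\in[1,\infty)$ gives \eqref{improv}.

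I expect the delicate part to be the reduction in the first step: checking that the Gaussian normalization makes the transverse integral collapse so that $F_1,F_2$ depend on a single variable and satisfy $0\leqslant F_j\leqslant 1$, since the nonnegativity of the cross term — and hence all the approximation estimates — rests entirely on this. The second point requiring care is the bookkeeping when Lemma \ref{LemmaLocOp} is applied to the characteristic-function symbols, so that $|T|$ and $|\Omega|$ appear with exponent $1/r$, the window norms reduce to $1$, and the $r$-dependent constant is the one that makes the supremum meaningful and independent of $\lambda_1,\lambda_2$.
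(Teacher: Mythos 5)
Your proposal is correct and follows essentially the same route as the paper's own proof: the reduction of $L_1,L_2$ to a multiplication operator and a Fourier multiplier via the Weyl correspondence, the energy-splitting identity with the nonnegative cross term $\int(1-F_j)F_j|\cdot|^2$ coming from $0\leqslant F_j\leqslant 1$, the triangle-inequality composition using $\|L_2\|\leqslant 1$, and the final application of Lemma \ref{LemmaLocOp} with $q=r$ followed by raising to the power $r$ and taking the supremum. No gaps; the two delicate points you flag are exactly the ones the paper verifies by explicit computation.
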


\begin{rem}
From \eqref{improv} we have in particular that
\begin{itemize}
\item[(1)] For $r\rightarrow 1^+$, then $|T||\Omega| \geqslant 1-\varepsilon_T-\varepsilon_\Omega$;
\item[(2)] For $r= 2$, then $|T||\Omega| \geqslant (1-\varepsilon_T-\varepsilon_\Omega)^{2}4^{d}$;
\item[(3)] One can prove that for any fixed value of the parameter $1-\varepsilon_T - \varepsilon_\Omega\in[0,1)$ the
supremum over $r\in[1,\infty)$ in the right-hand side of
\eqref{improv} is actually a maximum. For this maximum no explicit
expression is available but a study of the function $f(r) =
(1-\varepsilon_T - \varepsilon_\Omega)^{r}(r')^{\frac{r}{r'}2d}$ can
yield an approximation in dependence on $1-\varepsilon_T -
\varepsilon_\Omega$ which improves estimates (1) and (2).
\item[(4)] Remark that in the case where the inequalities \eqref{Hp} of the hypothesis are strict, the same proof yields a strict estimate in the thesis \eqref{improv}.
\end{itemize}
\end{rem}

We remark that whereas the case $\varepsilon_T=\varepsilon_\Omega=0$
in the classical Donoho-Stark UP yields $|T||\Omega|\geqslant 1$ (which is
a trivial assertion since in this case
either $|T|=+\infty$ or $|\Omega|=+\infty$, cf. \cite{Ben85}), the case
$\varepsilon_T=\varepsilon_\Omega=0$ in Theorem \ref{opL} is not
trivial and actually yields the following result.

\begin{cor}
Let $T$, $\Omega$, $L_1$, $L_2$ be as in Theorem \ref{opL} and
suppose that there exists $f\in L^{2}(\mathbb{R}^{d})$ such that
$$\|L_{1}f\|_{2}^{2}=\|f\|_{2}^{2} \quad
\text{and}\quad\|L_{2}f\|_{2}^{2}=\|f\|_{2}^{2} $$ then
$|T||\Omega|\geqslant e^{2d}$.
\end{cor}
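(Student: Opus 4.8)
The plan is to deduce the corollary directly from inequality \eqref{improv} of Theorem \ref{opL}, specialized to the limiting case $\varepsilon_T=\varepsilon_\Omega=0$. The first observation is that the hypotheses $\|L_1f\|_2^2=\|f\|_2^2$ and $\|L_2f\|_2^2=\|f\|_2^2$ are stronger than \eqref{Hp}: indeed, for every pair $\varepsilon_T,\varepsilon_\Omega>0$ with $\varepsilon_T+\varepsilon_\Omega\leqslant 1$ one has $\|L_1f\|_2^2=\|f\|_2^2\geqslant(1-\varepsilon_T^2)\|f\|_2^2$ and likewise for $L_2$. Thus Theorem \ref{opL} applies for every such choice, and since the quantity $|T||\Omega|$ on the left does not depend on these parameters, I would first fix $r\in[1,\infty)$, drop the supremum in \eqref{improv}, and then let $\varepsilon_T,\varepsilon_\Omega\to 0^+$. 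Because $(1-\varepsilon_T-\varepsilon_\Omega)^r\to 1$ for each fixed $r$, this yields
\[
|T||\Omega|\geqslant\left(\frac{r}{r-1}\right)^{2d(r-1)}\qquad\text{for every }r\in[1,\infty).
\]

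The second step is to evaluate the supremum over $r$ of the right-hand side. Writing $g(r)=\left(\frac{r}{r-1}\right)^{2d(r-1)}$ and substituting $n=r-1$, I would recognize $g(r)=\bigl[(1+1/n)^{n}\bigr]^{2d}$. Since $(1+1/n)^{n}$ is strictly increasing in $n>0$ and converges to $e$, the function $g$ is increasing on $(1,\infty)$ and
\[
\sup_{r\in[1,\infty)}\left(\frac{r}{r-1}\right)^{2d(r-1)}=\lim_{r\to\infty}\left(\frac{r}{r-1}\right)^{2d(r-1)}=e^{2d}.
\]
Combining this with the inequality from the first step gives $|T||\Omega|\geqslant e^{2d}$, as claimed.

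The argument is essentially immediate once Theorem \ref{opL} is available, so there is no serious obstacle; the only points requiring care are (i) the fact that the theorem is stated with strict inequalities $\varepsilon_T,\varepsilon_\Omega>0$, which is what forces the small detour of fixing $r$ before sending the concentration parameters to zero rather than substituting $\varepsilon_T=\varepsilon_\Omega=0$ outright, and (ii) the recognition that the relevant supremum is the classical limit defining $e$, which is approached but not attained. Alternatively, one may observe that the proof of Theorem \ref{opL} goes through verbatim when $\varepsilon_T=\varepsilon_\Omega=0$ (the hypotheses then force $L_1f=L_2f=f$, whence $\|L_1L_2f\|_2=\|f\|_2$), so the substitution $\varepsilon_T=\varepsilon_\Omega=0$ in \eqref{improv} is legitimate and delivers the bound in one line.
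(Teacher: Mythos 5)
Your proof is correct and follows essentially the same route as the paper: both reduce the corollary to Theorem \ref{opL} with $\varepsilon_T=\varepsilon_\Omega=0$ and identify $\sup_{r\in[1,\infty)}\left(\frac{r}{r-1}\right)^{2d(r-1)}=\lim_{r\to\infty}\bigl[(1+\tfrac{1}{r-1})^{r-1}\bigr]^{2d}=e^{2d}$. Your detour of fixing $r$ and letting $\varepsilon_T,\varepsilon_\Omega\to 0^+$ is a slightly more careful handling of the theorem's strict-positivity wording than the paper's direct substitution (which the paper justifies by noting, via Lemma \ref{LemmaLocOp}, that $\Vert L_j\Vert_{B(L^2)}=1$ forces the $\varepsilon=0$ hypotheses to become the stated equalities), but the substance is identical.
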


\begin{proof}
Observe that for $\varepsilon_T=\varepsilon_\Omega=0$ the hypotheses
of Theorem \ref{opL} become $\Vert L_1 f\Vert_2 = \Vert L_2 f\Vert_2
= \Vert f\Vert_2$, since from Lemma \ref{LemmaLocOp} we have $\Vert
L_1\Vert_{B(L^2)} = \Vert L_2\Vert_{B(L^2)} = 1$. Then the assertion
is proved by taking $\varepsilon_T=\varepsilon_\Omega=0$ in Theorem
\ref{opL} and remarking that $\sup_{r\in [1,\infty)}
\left(\frac{r}{r-1}\right)^{r-1}=\lim_{r\to+\infty}
\left(\frac{r}{r-1}\right)^{r-1}=e$.
\end{proof}

Another consequence of Theorem \ref{opL} is an UP involving the
marginal distributions of the spectrogram. We recall that the {\it
spectrogram} is the time-frequency representation given by
$\Sp_{\psi}(f,g)(x,\omega)=V_\psi f(x,\omega) \overline{V_\psi
g}(x,\omega)$, defined in terms of a Gabor transform with window
$\psi\in L^2(\Rd)$. It is an important and widely used tool in
signal analysis as well as in connection with the theory of
pseudo-differential operators, see e.g. \cite{BogOliWon-2006},
\cite{BonDemJam}, \cite{Coh89}, \cite{CohLou04}, \cite{FerGal10},
\cite{Wil00}. We denote its marginal distributions with
$\Sp_{\psi}^{(1)}(f,g)(x)=\int_{\Rd}\Sp_{\psi}(f,g)(x,\omega)\,
d\omega$ and
$\Sp_{\psi}^{(2)}(f,g)(\omega)=\int_{\Rd}\Sp_{\psi}(f,g)(x,\omega)\,
dx$.

\begin{cor}
Suppose that $f,g$ are functions in $L^2(\Rd)$ for which
$\|f\|_2=\|g\|_2=1$ and
\begin{equation*}
  \begin{array}{ll}
    \left\vert\int_{T}\Sp_{\Phi_1}^{(1)}(f,g)(x)\, dx\right\vert \geqslant
\sqrt{1-\varepsilon_T^2} & \hbox{\rm ;} \\
 \left\vert\int_{\Omega}\Sp_{\Phi_2}^{(2)}(f,g)(\omega)\,
d\omega\right\vert\geqslant \sqrt{1-\varepsilon_\Omega^2} & \hbox{.}
  \end{array}
\end{equation*}
Then
\[
|T||\Omega|\geqslant \sup_{r\in [1,\infty)}
(1-\varepsilon_{T}-\varepsilon_{\Omega})^{r}
\left(\frac{r}{r-1}\right)^{2d(r-1)}.
\]
\end{cor}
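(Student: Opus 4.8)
The plan is to recognize the two hypotheses as statements about the inner products $(L_1f,g)$ and $(L_2f,g)$, and then to deduce from them exactly the hypotheses \eqref{Hp} of Theorem \ref{opL}, from which the asserted bound on $|T||\Omega|$ is immediate.

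First I would prove the two identities
\[
(L_1f,g)=\int_T \Sp_{\Phi_1}^{(1)}(f,g)(x)\,dx,\qquad (L_2f,g)=\int_\Omega \Sp_{\Phi_2}^{(2)}(f,g)(\omega)\,d\omega.
\]
For the first, recall from the proof of Lemma \ref{LemmaLocOp} that $(L^a_{\phi,\psi}f,g)=(a,V_\psi g\,\overline{V_\phi f})$. Specializing to $\phi=\psi=\Phi_1$ and $a=\chi_T=\chi_{T\times\mathbb R^d}$ gives
\[
(L_1f,g)=(\chi_T,V_{\Phi_1}g\,\overline{V_{\Phi_1}f})=\int_{\R2d}\chi_T(x)\,V_{\Phi_1}f(x,\omega)\overline{V_{\Phi_1}g(x,\omega)}\,dx\,d\omega,
\]
where the $L^2$ conjugation of $\overline{V_{\Phi_1}f}$ restores $V_{\Phi_1}f$, so that the integrand factor is exactly $\Sp_{\Phi_1}(f,g)(x,\omega)$. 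Since $\chi_T$ depends only on $x$ and $\Sp_{\Phi_1}(f,g)\in L^1(\R2d)$ (by Cauchy--Schwarz, as $V_{\Phi_1}f,V_{\Phi_1}g\in L^2$), Fubini lets me integrate out $\omega$ and recover $\int_T\Sp_{\Phi_1}^{(1)}(f,g)(x)\,dx$. The second identity follows identically with $a=\chi_\Omega=\chi_{\mathbb R^d\times\Omega}$ and window $\Phi_2$, integrating out the $x$ variable instead.

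Next I would pass to the hypotheses of Theorem \ref{opL}. Using these identities together with Cauchy--Schwarz and $\|g\|_2=1$,
\[
\sqrt{1-\varepsilon_T^2}\leqslant |(L_1f,g)|\leqslant \|L_1f\|_2\|g\|_2=\|L_1f\|_2,
\]
so that $\|L_1f\|_2^2\geqslant 1-\varepsilon_T^2=(1-\varepsilon_T^2)\|f\|_2^2$ since $\|f\|_2=1$; in the same way $\|L_2f\|_2^2\geqslant(1-\varepsilon_\Omega^2)\|f\|_2^2$. These are precisely assumptions \eqref{Hp}, so (under the standing conditions $\varepsilon_T,\varepsilon_\Omega>0$, $\varepsilon_T+\varepsilon_\Omega\leqslant 1$ inherited from Theorem \ref{opL}) the claimed lower bound for $|T||\Omega|$ follows verbatim from that theorem.

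The argument is short, and the only delicate point is the bookkeeping in the first step: one must keep the $L^2$ inner-product convention and the conjugations straight so that the integrand is the spectrogram $V_{\Phi_1}f\,\overline{V_{\Phi_1}g}$ itself rather than its complex conjugate, and one must justify the interchange of integrations via Fubini. Beyond this I anticipate no substantive obstacle, since the corollary is essentially a translation of Theorem \ref{opL} into the language of spectrogram marginals together with a single application of Cauchy--Schwarz.
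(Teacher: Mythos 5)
Your proof is correct and follows essentially the same route as the paper: both identify $(L_jf,g)$ with the integral of the corresponding spectrogram marginal via the identity $(L^a_{\phi,\psi}f,g)=(a,V_\psi g\,\overline{V_\phi f})$, deduce the hypotheses \eqref{Hp} (you via Cauchy--Schwarz on the given $g$, the paper via the equivalent supremum characterization $\|L_jf\|_2=\sup_{\|g\|=1}|(L_jf,g)|$), and then invoke Theorem \ref{opL}.
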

\begin{proof}
Using the fundamental connection between localization operators and
spectrogram $(L^a_\psi
f,g)_{L^2(\Rd)}=(a,\Sp_{\psi}(g,f))_{L^2(\R2d)}$, which is a
consequence of \eqref{WeylWig} and \eqref{locWeyl}, we can rewrite
the hypothesis
$\|L_{1}f\|_{2}^{2}\geqslant(1-\varepsilon_{T}^{2})\|f\|_{2}^{2}$ of
Theorem \ref{opL} as
\begin{equation}\label{1}
\begin{array}{ll}
\sqrt{1-\varepsilon_T^2} & \leqslant \sup_{\|g\|=1} |(L_1
f,g)|=\sup_{\|g\|=1} |(\chi_T,\Sp_{\Phi_1}(g,f))|=
\\
& \sup_{\|g\|=1}\left|\int_{T\times\Rd}
\overline{\Sp_{\Phi_1}(g,f)}\, dx d\omega\right|=
\sup_{\|g\|=1}\left|\int_T \Sp_{\Phi_1}^{(1)}(f,g)\, dx\right|
\end{array}
\end{equation}
In analogous way the hypothesis
$\|L_{2}f\|_{2}^{2}\geqslant(1-\varepsilon_{\Omega}^{2})\|f\|_{2}^{2}$
reads
\begin{equation}\label{2}
\sqrt{1-\varepsilon_{\Omega}^2} \leqslant
\sup_{\|g\|=1}\left|\int_{\Omega} {\rm Sp}_{\Phi_2}^{(2)}(f,g)\,
d\omega\right|.
\end{equation}
In particular \eqref{1} and \eqref{2} are satisfied in our
hypothesis and therefore the thesis follows from Theorem \ref{opL}.
\end{proof}

\section{Comparison with Donoho-Stark}
This section is dedicated to the classical version of the
Donoho-Stark theorem. We use the results of the previous section to
prove a substantial improvement in constant
$(1-\varepsilon_T-\varepsilon_\Omega)^2$ appearing on the right-hand
side of estimate \eqref{DSineq}. Our result is the following.

\begin{prop}\label{ourDS}
Let $f\in L^2(\Rd)$, $T,\Omega\subset \Rd$, $\varepsilon_\Omega, \varepsilon_T>0$ satisfy the hypotheses of
Theorem \ref{DS} (Donoho-Stark), then
\begin{equation}\label{sup}
|T||\Omega|\geqslant \sup_{r\in [1,\infty)}
(1-\varepsilon_{T}-\varepsilon_{\Omega})^{r}
\left(\frac{r}{r-1}\right)^{2d(r-1)},
\end{equation}
and in particular
\begin{equation}\label{improvedDS}
|T||\Omega| \geqslant (1-\varepsilon_T - \varepsilon_{\Omega})^2 4^d
\end{equation}
\end{prop}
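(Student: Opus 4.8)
The plan is to deduce Proposition \ref{ourDS} directly from Theorem \ref{opL}, realizing the classical $\varepsilon$-concentration hypotheses as a limiting instance of the operator hypotheses \eqref{Hp}. First I would rewrite the Donoho--Stark assumptions in ``mass'' form: by Definition \ref{epscon}, $f$ being $\varepsilon_T$-concentrated on $T$ is equivalent to $\int_T |f(x)|^2\,dx \geqslant (1-\varepsilon_T^2)\|f\|_2^2$, and likewise $\int_\Omega |\widehat f(\omega)|^2\,d\omega \geqslant (1-\varepsilon_\Omega^2)\|f\|_2^2$. The right-hand quantities are exactly $\|\chi_T f\|_2^2$ and $\|\chi_\Omega \widehat f\|_2^2$, i.e. the norms produced by the \emph{ideal} concentration operators (multiplication by $\chi_T$, Fourier multiplication by $\chi_\Omega$), and my goal is to approximate these by the operators $L_1,L_2$ of Theorem \ref{opL}.

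The second step is to recognize the ideal operators as limits of $L_1,L_2$. From the proof of Theorem \ref{opL}, $L_1$ is multiplication by $F_1=\chi_T * |\Phi_1|^2$ and $L_2$ is Fourier multiplication by $F_2=\chi_\Omega * |\widehat{\Phi_2}|^2$, with $0\leqslant F_j\leqslant 1$. Since $\Phi_1,\Phi_2$ are normalized in $L^2$, the convolution kernels $|\Phi_1|^2$ and $|\widehat{\Phi_2}|^2$ are normalized in $L^1$, and they are Gaussians concentrating at the origin as $\lambda_1\to+\infty$ and $\lambda_2\to 0^+$ respectively (equivalently, the time-energy density of $\Phi_1$ and the frequency-energy density of $\Phi_2$ tend to a Dirac mass). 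Hence $F_1\to\chi_T$ and $F_2\to\chi_\Omega$ pointwise a.e.\ by the approximate-identity property, and since $0\leqslant F_j\leqslant 1$, dominated convergence (together with Plancherel for $L_2$) gives
\[
\|L_1 f\|_2^2=\int |F_1|^2|f|^2\longrightarrow \int_T |f|^2,\qquad \|L_2 f\|_2^2=\int |F_2|^2|\widehat f|^2\longrightarrow \int_\Omega |\widehat f|^2 .
\]

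With this convergence in hand I would run a short $\delta$-argument. Fix $r\in[1,\infty)$ and $\delta>0$, and set $\widehat\varepsilon_T=\sqrt{\varepsilon_T^2+\delta}$, $\widehat\varepsilon_\Omega=\sqrt{\varepsilon_\Omega^2+\delta}$. Because the convergence above need not be monotone, the buffer $\delta$ is precisely what guarantees that, for $\lambda_1$ large and $\lambda_2$ small enough, one has $\|L_1 f\|_2^2\geqslant(1-\widehat\varepsilon_T^2)\|f\|_2^2$ and $\|L_2 f\|_2^2\geqslant(1-\widehat\varepsilon_\Omega^2)\|f\|_2^2$; that is, the hypotheses \eqref{Hp} hold with $\widehat\varepsilon_T,\widehat\varepsilon_\Omega$. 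The strict Donoho--Stark inequality $\varepsilon_T+\varepsilon_\Omega<1$ ensures $\widehat\varepsilon_T+\widehat\varepsilon_\Omega\leqslant 1$ for $\delta$ small, so Theorem \ref{opL} applies and yields in particular $|T||\Omega|\geqslant(1-\widehat\varepsilon_T-\widehat\varepsilon_\Omega)^r\left(\frac{r}{r-1}\right)^{2d(r-1)}$ by keeping only the term of the supremum at our fixed $r$.

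Finally I would let $\delta\to 0^+$: since $\widehat\varepsilon_T\to\varepsilon_T$ and $\widehat\varepsilon_\Omega\to\varepsilon_\Omega$, the fixed-$r$ bound passes to $|T||\Omega|\geqslant(1-\varepsilon_T-\varepsilon_\Omega)^r\left(\frac{r}{r-1}\right)^{2d(r-1)}$, and taking the supremum over $r\in[1,\infty)$ gives \eqref{sup}; the choice $r=2$ produces $(1-\varepsilon_T-\varepsilon_\Omega)^2 4^d$, which is \eqref{improvedDS}. The main obstacle is this limiting step: one must check that the Gaussian convolution kernels $|\Phi_1|^2$, $|\widehat{\Phi_2}|^2$ are genuine approximate identities in the correct regimes ($\lambda_1\to+\infty$, $\lambda_2\to 0^+$) and justify passing to the limit inside the $L^2$ norms. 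Working with a \emph{fixed} $r$ and a single auxiliary parameter $\delta$ — rather than attempting to commute the supremum over $r$ with the limit — is what keeps this step clean.
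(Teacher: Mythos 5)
Your proposal is correct and follows essentially the same route as the paper: realize the ideal operators $Pf=\chi_T f$ and $Qf=\mathcal F^{-1}\chi_\Omega\mathcal F f$ as limits of $L_1,L_2$, transfer the concentration hypotheses with a small buffer on $\varepsilon_T,\varepsilon_\Omega$ (your $\delta$-shift plays the role of the paper's auxiliary $\nu_T>\varepsilon_T$, $\nu_\Omega>\varepsilon_\Omega$), apply Theorem \ref{opL}, and pass to the limit. The one genuine difference is in how you justify the limiting step: the paper proves $L_jf\to Pf,Qf$ in $L^2(\Rd)$ only for $f\in L^{2p}$ with $p>1$ (via H\"older and $L^{2p'}$ approximate-identity convergence) and then needs a density argument with Schwartz approximants $f_n\to f$; you instead observe that $\|L_1f\|_2^2=\int|F_1|^2|f|^2$ and $\|L_2f\|_2^2=\int|F_2|^2|\widehat f|^2$, that $F_1\to\chi_T$ and $F_2\to\chi_\Omega$ a.e.\ (Lebesgue points of the characteristic functions, Gaussian kernels being standard approximate identities with radially decreasing integrable majorant), and that $0\leqslant F_j\leqslant 1$ permits dominated convergence. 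This gives convergence of the relevant norms for \emph{every} $f\in L^2(\Rd)$ and eliminates the density step entirely, which is a mild but real simplification; everything else, including fixing $r$ before sending $\delta\to 0^+$ and only then taking the supremum over $r$, is sound.
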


\begin{proof}
Let $Pf = \chi_T f$ and $Qf= \mathcal{F}^{-1}\chi_\Omega
\mathcal{F}f$, then the hypotheses of the Donoho-Stark UP (Thm.
\ref{DS}) can be rewritten as
\[ \|Pf\|_{2}^{2}\geqslant(1-\varepsilon_{T}^{2})\|f\|_2^2\quad
\text{and}\quad
\|Qf\|_{2}^{2}\geqslant(1-\varepsilon_{\Omega}^{2})\|f\|_{2}^{2}.
\]
From the condition $\varepsilon_T+\varepsilon_\Omega<1$ we can choose $\nu_T>\varepsilon_T$, $\nu_\Omega>\varepsilon_\Omega$, also satisfying $\nu_T+\nu_\Omega<1$. For $\nu_T$, $\nu_\Omega$ the strict inequalities hold:
\begin{equation}\label{>}
\|Pf\|_{2}^{2}>(1-\nu_T^{2})\|f\|_2^2\quad
\text{and}\quad
\|Qf\|_{2}^{2}>(1-\nu_\Omega^{2})\|f\|_{2}^{2}.
\end{equation}
Let us consider the operators $L_1$ and $L_2$ as defined in
\eqref{op1} and \eqref{op2} respectively. We recall that
$L_j=W^{F_j}$, $j=1,2$, as Weyl operators with $F_1
=c_1^2(\chi_T\ast e^{-\pi 2\lambda_1(\cdot)^2})^{}(x)$ and $F_2
=c_2^2 \lambda_2^{-d}(\chi_\Omega\ast e^{-\pi
\frac{2}{\lambda_2}(\cdot)^2})^{}(\omega)$. Setting now
$\varphi_{\lambda}(x) = \lambda^{d/2}e^{-\pi\lambda x^2}$, we have $
F_1 = \chi_T\ast \varphi_{2\lambda_1}$ and $F_2 = \chi_\Omega
\ast\varphi_{\frac{2}{\lambda_2}}$.
Notice that
$\|\varphi_{2\lambda_1}\|_1 = \|\varphi_{\frac{2}{\lambda_2}}\|_{1}
=1$ and that $\varphi_{2\lambda_1}\rightarrow\delta$ for $\lambda_1
\rightarrow+\infty$, and
$\varphi_{\frac{2}{\lambda_2}}\rightarrow\delta$ for $\lambda_2
\rightarrow 0^+$ in $\mathcal{S'}(\Rd)$, so that
$\{\varphi_{2\lambda_1}\}_{\lambda_1 \in\mathbb{R}}$ and
$\{\varphi_{\frac{2}{\lambda_2}}\}_{\lambda_2 \in\mathbb{R}}$ are
approximate identities.

We prove now that if a function $f$ is suitably regular then:
\begin{itemize}
\item[(a)] $\left\|(\chi_T\ast\varphi_{2\lambda_1})f - \chi_T f\right\|_{2}\rightarrow 0$,
\ i.e. $L_1 f\rightarrow Pf$ in $L^{2}(\Rd)$, \ as
$\lambda_1\rightarrow +\infty$.
\item[(b)] $\|\mathcal{F}^{-1}[(\chi_\Omega\ast\varphi_{\frac{2}{\lambda_2}})\widehat{f}] - \mathcal{F}^{-1}[\chi_\Omega \widehat{f}]\|_{2}\rightarrow 0$,\ i.e. $L_2 f\rightarrow Qf$ in $L^{2}(\Rd)$,\ as $\lambda_2\rightarrow 0^+$.
\end{itemize}
Let us consider $(a)$:
\begin{align}
\left\|(\chi_T\ast\varphi_{2\lambda_1})f - \chi_T f\right\|_{2} &= \left\|\left(\chi_T\ast\varphi_{2\lambda_1} - \chi_T\right) f\right\|_{2}\nonumber\\
&\leqslant\|\chi_T\ast\varphi_{2\lambda_1} - \chi_T\|_{2p'}\|f\|_{2p}\nonumber
\end{align}
for all $p\in[1,\infty]$. From the properties of approximate
identities the first norm in the last line goes to $0$ as $\lambda_1
\rightarrow\infty$, if $p'<\infty$ and the second term is constant
if $f\in L^{2p}(\Rd)$. Therefore $(a)$ is valid for all $f$ for
which there exists $p>1$ such that $f\in L^{2p}(\Rd)$. In
particular, this is true for all functions in $\mathcal S(\Rd)$. In
a similar way (b) can be proven, indeed we have:
\begin{align}
\left\|\mathcal{F}^{-1}\left[(\chi_{\Omega}\ast\varphi_{2/\lambda_2})\widehat{f}\right] - \mathcal{F}^{-1}\left[\chi_{\Omega}\widehat{f}\right]\right\|_{2}& = \left\|\left((\chi_{\Omega}\ast\varphi_{2/\lambda_2}) - \chi_{\Omega}\right)\widehat{f}\right\|_{2}\nonumber\\
&\leqslant\|\chi_\Omega \ast \varphi_{2/ \lambda_2} -
\chi_\Omega\|_{2p'}\|\widehat{f}\|_{2p}\nonumber
\end{align}
where for $p'<\infty$ the first norm in the last line goes to $0$ as
$\lambda_2\rightarrow 0^+$, and the second is constant if for
instance $f\in \mathcal S(\Rd)$.

Suppose now that the function $f\in L^2(\Rd)$ satisfies the
Donoho-Stark hypotheses and let $f_n\in \mathcal S(\Rd)$ be such
that $f_n \rightarrow f$ in $L^{2}(\Rd)$. Then $Pf_n\rightarrow Pf$
in $L^{2}(\Rd)$ and, therefore,
$\frac{\|Pf_n\|_{2}}{\|f_n\|_2}\rightarrow\frac{\|Pf\|_2}{\|f\|_2}$.
From the first inequality in \eqref{>},
$(1-\nu_{T}^{2})^{1/2}<\frac{\|Pf\|_2}{\|f\|_2}$, and
therefore there exists $n_1$ such that for all $n>n_1$ we have
\begin{equation}\label{confr1}
(1-\nu_{T}^{2})^{1/2}<\frac{\|Pf_n\|_2}{\|f_n\|_2}.
\end{equation}
On the other hand $Qf_n\rightarrow Qf$ in $L^{2}(\Rd)$ and hence
$\frac{\|Qf_n\|_{2}}{\|f_n\|_2}\rightarrow\frac{\|Qf\|_2}{\|f\|_2}$.
Similarly, by the second inequality in \eqref{>},
$(1-\nu_{\Omega}^{2})^{1/2}<\frac{\|Qf\|_2}{\|f\|_2}$, and it
follows that there exists $n_2$ such that $\forall n>n_2$
\begin{equation}\label{confr2}
(1-\nu_{\Omega}^{2})^{1/2}<\frac{\|Qf_n\|_2}{\|f_n\|_2}.
\end{equation}
For $n>\max\{n_1,n_2\}$ both \eqref{confr1} and \eqref{confr2} hold,
i.e. the hypotheses of Donoho-Stark hold therefore on $f_n$. As $f_n
\in \mathcal S(\Rd)$, it follows that
\[
L_1 f_n\rightarrow Pf_n, \quad \text{as} \quad \lambda_1\rightarrow +\infty
\]
and
\[
L_2 f_n\rightarrow Qf_n \quad\text{as}\quad \lambda_2\rightarrow 0^+
\]
in $L^2(\mathbb{R}^d)$. Then for $\lambda_1$ sufficiently large and $\lambda_2$ sufficiently
small, from \eqref{confr1} and \eqref{confr2} we have:
\[
(1-\nu_{T}^{2})^{1/2}<\frac{\|L_{1}f_n\|_2}{\|f_n\|_2}\quad\text{and}\quad(1-\nu_{\Omega}^{2})^{1/2}<\frac{\|L_{2}f_n\|_2}{\|f_n\|_2},
\]
i.e. $f_n$ satisfies the hypotheses of Theorem \ref{opL} and the
thesis follows from \eqref{improv} with $\nu_T$, $\nu_\Omega$ in place of $\varepsilon_T$, $\varepsilon_\Omega$ respectively, i.e.
\begin{equation}\label{nu}
|T||\Omega|\geqslant \sup_{r\in [1,\infty)}
(1-\nu_{T}-\nu_{\Omega})^{r}
\left(\frac{r}{r-1}\right)^{2d(r-1)}.
\end{equation}
Finally the thesis follows taking in \eqref{nu} the supremum over all $\nu_T>\varepsilon_T$ and $\nu_\Omega>\varepsilon_\Omega$.
\end{proof}

\section{Donoho-Stark Uncertainty principle and local uncertainty principle}

The Donoho-Stark UP states that there are restrictions to the
behavior of a function and its Fourier transform, from a local
viewpoint. There are other results in this direction in the
literature, see e.g. \cite{CohLou04}; here we want to consider the
local UP of Price, cf. \cite{Pri87}, and investigate some
consequences as well as the connections between these two UPs. More
precisely, under the hypotheses of Donoho and Stark we prove a
different estimate of $\vert T\vert \vert\Omega\vert$, with a
constant depending on the function $f$. Moreover, we obtain
a new UP involving the measure of the support of a function and the
standard deviation of its Fourier transform.

We start by recalling the result of Price.

\begin{thm}[Price \textup{\cite[Theorem 1.1]{Pri87}}]\label{PriceUP}
Let $\Omega\subset\mathbb{R}^d$ be a measurable set and $\alpha>d/2$. Then for every $f\in L^2(\mathbb{R}^d)$ we have
\begin{equation}\label{IneqLocUP}
\int_\Omega \vert\widehat{f}(\omega)\vert^2\,d\omega < K_1 \vert \Omega\vert\Vert f\Vert_2^{2-d/\alpha} \Vert \vert t\vert^\alpha f\Vert_2^{d/\alpha},
\end{equation}
where $K_1$ is a constant depending on $d$ and $\alpha$, given by
\begin{equation}\label{K1}
\begin{split}
K_1 &= K_1(d,\alpha) \\
&= \frac{\pi^{d/2}}{\alpha} \left( \Gamma\left(\frac{d}{2}\right) \right)^{-1} \Gamma\left(\frac{d}{2\alpha}\right) \Gamma\left(1-\frac{d}{2\alpha}\right) \left(\frac{2\alpha}{d}-1\right)^{\frac{d}{2\alpha}} \left( 1-\frac{d}{2\alpha}\right)^{-1}
\end{split}
\end{equation}
and $\Gamma$ is the Gamma function defined as $\Gamma(x)=\int_0^{+\infty} t^{x-1} e^{-t}\,dt$. Moreover, the constant $K_1$ is optimal, and equality in \textup{\eqref{IneqLocUP}} is never attained.
\end{thm}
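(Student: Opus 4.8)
The plan is to collapse the local estimate \eqref{IneqLocUP} onto a single sharp weighted inequality between $\|f\|_1$, $\|f\|_2$ and $\|\,|t|^\alpha f\,\|_2$, and to let the geometry of $\Omega$ enter only through the crudest possible bound. We may assume the right-hand side of \eqref{IneqLocUP} is finite, otherwise there is nothing to prove. First I would write
\[
\int_\Omega |\widehat f(\omega)|^{2}\,d\omega \leqslant |\Omega|\,\|\widehat f\|_\infty^{2}\leqslant |\Omega|\,\|f\|_1^{2},
\]
the first inequality bounding an integral by measure times essential supremum, the second being the elementary $\|\widehat f\|_\infty\leqslant\|f\|_1$; the finiteness of $\|f\|_1$, and hence the continuity of $\widehat f$, will be a by-product of the next step. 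This reduces everything to the sharp inequality $\|f\|_1^{2}\leqslant K_1\,\|f\|_2^{2-d/\alpha}\,\big\|\,|t|^\alpha f\,\big\|_2^{d/\alpha}$.

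To prove this I would introduce two free parameters $a,b>0$, factor $|f|=|f|\,(a+b|t|^{2\alpha})^{1/2}(a+b|t|^{2\alpha})^{-1/2}$, and apply the Cauchy--Schwarz inequality to obtain
\[
\|f\|_1^{2}\leqslant \left(\int_{\Rd}\frac{dt}{a+b|t|^{2\alpha}}\right)\Big(a\,\|f\|_2^{2}+b\,\big\|\,|t|^\alpha f\,\big\|_2^{2}\Big).
\]
The weight integral is finite \emph{exactly} when $\alpha>d/2$, which is where the hypothesis is used. I would evaluate it in polar coordinates and reduce the radial part, via $u=r^{2\alpha}$, to the Beta-type integral $\int_0^\infty u^{d/(2\alpha)-1}(1+u)^{-1}\,du=\Gamma\!\big(\tfrac{d}{2\alpha}\big)\Gamma\!\big(1-\tfrac{d}{2\alpha}\big)$; together with the sphere measure $2\pi^{d/2}/\Gamma(d/2)$ and the scaling factor $a^{d/(2\alpha)-1}b^{-d/(2\alpha)}$ this yields $\int_{\Rd}(a+b|t|^{2\alpha})^{-1}dt = a^{d/(2\alpha)-1}b^{-d/(2\alpha)}\,\frac{\pi^{d/2}}{\alpha\,\Gamma(d/2)}\Gamma\!\big(\tfrac{d}{2\alpha}\big)\Gamma\!\big(1-\tfrac{d}{2\alpha}\big)$.

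Since the resulting bound is jointly homogeneous of degree zero in $(a,b)$, it depends only on the ratio $a/b$, and minimizing the product $a^{\beta-1}b^{-\beta}\big(a\|f\|_2^{2}+b\|\,|t|^\alpha f\,\|_2^{2}\big)$ over that single variable, with $\beta=d/(2\alpha)\in(0,1)$, is a one-line calculus problem whose minimum equals $\beta^{-\beta}(1-\beta)^{-(1-\beta)}\,\|f\|_2^{2(1-\beta)}\|\,|t|^\alpha f\,\|_2^{2\beta}$. The exponents $2(1-\beta)=2-d/\alpha$ and $2\beta=d/\alpha$ are exactly those in \eqref{IneqLocUP}, and one checks that $\beta^{-\beta}(1-\beta)^{-(1-\beta)}$ equals $\big(\tfrac{2\alpha}{d}-1\big)^{d/2\alpha}\big(1-\tfrac{d}{2\alpha}\big)^{-1}$, so that multiplying by the weight constant reassembles the full $K_1$ of \eqref{K1}.

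The \emph{main obstacle} is not the constant but the strict inequality and its optimality. For optimality I would test the bound on the very profile that saturates Cauchy--Schwarz, namely $f(t)=(a+b|t|^{2\alpha})^{-1}$ with $a/b$ at the optimal ratio: this $f$ is positive, so $\widehat f(0)=\|f\|_1=\|\widehat f\|_\infty$, and choosing $\Omega$ to be a small ball centred at the origin makes $|\Omega|^{-1}\int_\Omega|\widehat f|^2\to\|\widehat f\|_\infty^2$ by continuity, so the ratio in \eqref{IneqLocUP} tends to $K_1$; this shows $K_1$ cannot be lowered. That equality is never attained then follows from the fact that for any genuine $f\neq0$ the first majorization above is strict: $\widehat f$ is continuous and tends to $0$, so $|\widehat f|$ cannot equal its maximum almost everywhere on a set of positive finite measure. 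Making this last incompatibility fully rigorous for an arbitrary measurable $\Omega$ — ruling out in particular a maximal ``plateau'' of $|\widehat f|$ of positive measure and reconciling it with the Cauchy--Schwarz equality case — is the delicate point I expect to require the most care.
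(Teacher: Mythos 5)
First, a point of comparison: the paper does not prove Theorem \ref{PriceUP} at all --- it is quoted verbatim from Price \cite{Pri87}, and the only proof in the paper in this circle of ideas is the two-line argument for the generalized Theorem \ref{GeneralizedPriceUP}, namely $\int_\Omega|\widehat f|^2\leqslant|\Omega|\,\Vert\widehat f\Vert_\infty^2$ followed by an appeal to the (again cited, not proved) weighted inequality \eqref{GeneralizedLemma}. Your proposal uses exactly that first reduction and then actually supplies the missing ingredient: the sharp bound $\Vert f\Vert_1^2\leqslant K_1\Vert f\Vert_2^{2-d/\alpha}\Vert\,|t|^\alpha f\Vert_2^{d/\alpha}$ via Cauchy--Schwarz against the weight $(a+b|t|^{2\alpha})^{-1}$, the Beta-integral evaluation, and the optimization in $a/b$. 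I checked the computation: the weight integral, the exponents $2-d/\alpha$ and $d/\alpha$, and the identity $\beta^{-\beta}(1-\beta)^{-(1-\beta)}=\bigl(\tfrac{2\alpha}{d}-1\bigr)^{d/2\alpha}\bigl(1-\tfrac{d}{2\alpha}\bigr)^{-1}$ with $\beta=d/2\alpha$ all reassemble into \eqref{K1} correctly, and your optimality argument (testing the extremal profile against shrinking balls $\Omega$) is sound, since $(a+b|t|^{2\alpha})^{-1}$ lies in $L^1\cap L^2$ with finite $\alpha$-moment precisely when $\alpha>d/2$. This is in substance Price's own proof, so the route is not new, but it is a complete replacement for the citation, which is more than the paper offers.

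The one genuine gap is the strictness argument, and your diagnosis of where the difficulty lies is right but your proposed resolution is wrong as stated. You claim the first majorization $\int_\Omega|\widehat f|^2\leqslant|\Omega|\,\Vert\widehat f\Vert_\infty^2$ is always strict because a continuous function vanishing at infinity ``cannot equal its maximum almost everywhere on a set of positive finite measure''. It can: $|\widehat f|$ may have a plateau of positive measure at its maximum (take $\widehat f$ a smooth bump identically $1$ on a ball), so no single link of the chain is strict for all admissible $f$ and $\Omega$. The correct argument combines the equality cases of all three links: equality in Cauchy--Schwarz forces $|f(t)|=c\,(a+b|t|^{2\alpha})^{-1}$, equality in $\Vert\widehat f\Vert_\infty\leqslant\Vert f\Vert_1$ forces $f(t)=e^{i\theta}e^{2\pi i t\overline{\omega}}|f(t)|$ for some fixed $\overline{\omega}$, and then $\widehat f$ is a modulation of the Fourier transform of a strictly positive $L^1$ function, so $|\widehat f(\omega)|<|\widehat f(\overline{\omega})|$ for every $\omega\neq\overline{\omega}$; this contradicts $|\widehat f|=\Vert\widehat f\Vert_\infty$ a.e.\ on a set of positive measure, which is what the first link's equality requires. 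With that substitution your proof closes. (Separately, note that the strict inequality and the ``never attained'' clause must implicitly exclude the degenerate cases $f=0$ and $|\Omega|=0$, where both sides vanish; that imprecision is in the quoted statement, not in your argument.)
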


At first, we observe that Theorem \ref{PriceUP}, stated in $L^2$ spaces in \cite{Pri87}, can be easily generalized; in fact, it is proved in \cite[Corollary 2.2]{Pri87} that for every $q\in (1,\infty]$, $\alpha>\frac{d}{q^\prime}$, and $f\in L^q(\mathbb{R}^d)$, we have
\begin{equation}\label{GeneralizedLemma}
\Vert\widehat{f}\Vert_\infty\leq \tilde{K}\Vert f\Vert_q^{1-d/\alpha q^\prime} \Vert \vert t\vert^\alpha f\Vert_q^{d/\alpha q^\prime},
\end{equation}
where
\begin{equation}\label{KTilde}
\tilde{K}=\left[ \frac{2\pi^{d/2}}{\Gamma(d/2)}\frac{1}{\alpha q} B\left(\frac{d}{\alpha q},\frac{1}{q-1}-\frac{d}{\alpha q}\right)\right]^{\frac{q-1}{q}} \left( \frac{\alpha q^\prime}{d}-1\right)^{d/qq^\prime\alpha} \left(1-\frac{d}{\alpha q^\prime}\right)^{-1/q}
\end{equation}
and $B(\cdot,\cdot)$ is the Beta function, given by $B(x,y)=\int_0^1 t^{x-1}(1-t)^{y-1}\,dt$. Then, by the same proof of \cite[Theorem 1.1]{Pri87} we get the following result.

\begin{thm}\label{GeneralizedPriceUP}
Let $\Omega\subset\mathbb{R}^d$ be a measurable set, $q\in (1,\infty]$ and $\alpha>d/q^\prime$. Then for every $f\in L^q(\mathbb{R}^d)$ we have
\begin{equation}\label{GeneralizedIneqLocUP}
\int_\Omega \vert\widehat{f}(\omega)\vert^2\,d\omega \leq K(d,\alpha,q) \vert \Omega\vert\Vert f\Vert_q^{2-2d/\alpha q^\prime} \Vert \vert t\vert^\alpha f\Vert_q^{2d/\alpha q^\prime},
\end{equation}
where $K(d,\alpha,q)=\tilde{K}^2$, and $\tilde{K}$ is given by \eqref{KTilde}.
\end{thm}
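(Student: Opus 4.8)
The plan is to deduce the local $L^2$ bound \eqref{GeneralizedIneqLocUP} directly from the global $L^\infty$ bound \eqref{GeneralizedLemma}, exactly as in Price's derivation of \cite[Theorem 1.1]{Pri87} from his Corollary 2.2. Since $f\in L^q(\mathbb{R}^d)$ with $\alpha>d/q^\prime$, the estimate \eqref{GeneralizedLemma} already guarantees that $\widehat f$ is bounded, so both sides of \eqref{GeneralizedIneqLocUP} are meaningful.

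The key (and essentially only) step is the trivial majorization of the local energy of $\widehat f$ by the measure of $\Omega$ times its squared sup-norm: for any measurable $\Omega\subset\mathbb{R}^d$,
\[
\int_\Omega \vert\widehat f(\omega)\vert^2\,d\omega \leq \vert\Omega\vert\,\Vert\widehat f\Vert_\infty^2 .
\]
I would then square \eqref{GeneralizedLemma}, obtaining
\[
\Vert\widehat f\Vert_\infty^2 \leq \tilde K^2\,\Vert f\Vert_q^{2-2d/\alpha q^\prime}\,\Vert \vert t\vert^\alpha f\Vert_q^{2d/\alpha q^\prime},
\]
and substitute this into the previous inequality. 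This yields \eqref{GeneralizedIneqLocUP} verbatim with $K(d,\alpha,q)=\tilde K^2$, where $\tilde K$ is as in \eqref{KTilde}.

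There is no genuine obstacle here: the argument is a one-line consequence of \eqref{GeneralizedLemma}, and it reproduces Price's original scheme (for $q=2$ one has $q^\prime=2$, and the constant $K_1$ of Theorem \ref{PriceUP} is precisely $\tilde K^2$). The only point deserving a line of verification is that $\tilde K$ is finite under the stated hypotheses, i.e.\ that the Beta function in \eqref{KTilde} has positive arguments. Both $\frac{d}{\alpha q}>0$ and
\[
\frac{1}{q-1}-\frac{d}{\alpha q}=\frac{1}{q-1}\left(1-\frac{d}{\alpha q^\prime}\right)>0
\]
hold exactly because $\alpha>d/q^\prime$ (equivalently $\alpha q^\prime>d$), which also makes the exponent $2-2d/\alpha q^\prime$ positive. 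Since this is the very condition already assumed, no extra hypothesis is required, and the passage from the $L^\infty$ estimate to the local $L^2$ estimate is automatic.
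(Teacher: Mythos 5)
Your proposal is correct and follows essentially the same route as the paper: bound $\int_\Omega|\widehat f|^2$ by $|\Omega|\,\|\widehat f\|_\infty^2$ and then apply the squared form of \eqref{GeneralizedLemma}. The paper's only additional remark is that one may restrict to the case where the right-hand side of \eqref{GeneralizedIneqLocUP} is finite, which forces $f\in L^1(\mathbb{R}^d)$ and hence $\widehat f$ bounded and continuous; your finiteness check on the Beta function arguments is a harmless supplement.
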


\begin{proof}
We only have to prove the statement when the right-hand side of \eqref{GeneralizedIneqLocUP} is finite; in this case we have $f\in L^1(\mathbb{R}^d)$, and so $\widehat{f}$ is a continuous bounded function; we then have
$$
\int_\Omega \vert\widehat{f}(\omega)\vert^2\,d\omega\leq \vert\Omega\vert \Vert \widehat{f}\Vert_\infty^2,
$$
and the conclusion is an application of \eqref{GeneralizedLemma}.
\end{proof}

We can formulate our new version of the Donoho-Stark UP with constant depending on the signal $f$.
\begin{thm}\label{DSfdepending}
Let $\Omega$ and $T$ be two measurable subsets of $\mathbb{R}^d$, $q_j\in (1,\infty]$, $\alpha_j>d/q^\prime_j$, $j=1,2$
and $f\in L^1(\mathbb{R}^d)$ such that $\widehat{f}\in L^1(\mathbb{R}^d)$, $f\neq 0$. Suppose that $f$ is
$\varepsilon_T$-concentrated on $T$, and $\widehat{f}$ is
$\varepsilon_\Omega$-concentrated on $\Omega$, with
$0\leqslant\varepsilon_T,\varepsilon_\Omega\leqslant 1$ and
$\varepsilon_T+\varepsilon_\Omega\leqslant 1$. Then
\begin{equation}\label{DSestimate}
\vert T\vert\vert\Omega\vert\geq C_f
(1-\varepsilon_T-\varepsilon_\Omega)^2,
\end{equation}
where $C_f$ is the supremum over $\overline{t},\overline{\omega}\in\mathbb{R}^d$, $q_j\in (1,\infty]$ and $\alpha_j>d/q_j^\prime$, $j=1,2$, of the quantities
\begin{equation*}
\frac{\Vert
f\Vert_2^4 \Vert \widehat{f}\Vert_{q_1}^{2d/(\alpha_1 q_1^\prime)} \Vert f\Vert_{q_2}^{2d/(\alpha_2 q_2^\prime)}}{K(d,\alpha_1,q_1) K(d,\alpha_2,q_2) \Vert f\Vert_{q_2}^2 \Vert \widehat{f}\Vert_{q_1}^2 \Vert \vert t-\overline{t}\vert^{\alpha_2} f\Vert_{q_2}^{2d/(\alpha_2 q_2^\prime)} \Vert \vert \omega-\overline{\omega}\vert^{\alpha_1} \widehat{f}\Vert_{q_1}^{2d/(\alpha_1 q_1^\prime)}},
\end{equation*}
and $K(d,\alpha_j,q_j)$, $j=1,2$, are the ones appearing in \eqref{GeneralizedIneqLocUP}.
\end{thm}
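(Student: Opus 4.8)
The plan is to combine the two $\varepsilon$-concentration hypotheses with two separate applications of the generalized local uncertainty principle of Theorem \ref{GeneralizedPriceUP}: one controlling the mass of $\widehat f$ on $\Omega$ and one controlling the mass of $f$ on $T$. Each application will produce an independent lower bound, for $|\Omega|$ and for $|T|$ respectively (these are exactly the independent bounds announced in Remark \ref{strongerresult}), and multiplying them will give the product estimate, after which an elementary inequality converts $(1-\varepsilon_T^2)(1-\varepsilon_\Omega^2)$ into the stated $(1-\varepsilon_T-\varepsilon_\Omega)^2$.

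First I would record the two consequences of concentration. Since $f$ is $\varepsilon_T$-concentrated on $T$,
\[
\int_T |f(x)|^2\,dx = \|f\|_2^2 - \int_{\Rd\setminus T}|f|^2 \ge (1-\varepsilon_T^2)\|f\|_2^2,
\]
and, using Plancherel ($\|\widehat f\|_2=\|f\|_2$), the $\varepsilon_\Omega$-concentration of $\widehat f$ gives $\int_\Omega |\widehat f|^2 \ge (1-\varepsilon_\Omega^2)\|f\|_2^2$. I would also note at the outset that $f,\widehat f\in L^1$ forces $f,\widehat f\in L^1\cap L^\infty\subseteq L^q$ for every $q\in[1,\infty]$, so that Theorem \ref{GeneralizedPriceUP} applies for all admissible $q_1,q_2,\alpha_1,\alpha_2$; where a weighted norm happens to be infinite the corresponding quotient vanishes and is harmless to the supremum defining $C_f$.

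Next, for the $\Omega$-side I would apply Theorem \ref{GeneralizedPriceUP} to the translate $f(\cdot+\overline t)$, whose Fourier transform has the same modulus as $\widehat f$ and whose weighted $L^{q_2}$-norm is $\||t-\overline t|^{\alpha_2}f\|_{q_2}$, obtaining
\[
(1-\varepsilon_\Omega^2)\|f\|_2^2 \le \int_\Omega|\widehat f|^2 \le K(d,\alpha_2,q_2)\,|\Omega|\,\|f\|_{q_2}^{2-2d/(\alpha_2 q_2')}\||t-\overline t|^{\alpha_2}f\|_{q_2}^{2d/(\alpha_2 q_2')}.
\]
For the $T$-side I would exploit Fourier duality: with the paper's convention $\widehat{\widehat f}(\xi)=f(-\xi)$, so applying Theorem \ref{GeneralizedPriceUP} to the translate $\widehat f(\cdot+\overline\omega)$ on the set $-T$ (which has measure $|T|$) transfers $\int_T|f|^2$ into a frequency-concentration integral and yields
\[
(1-\varepsilon_T^2)\|f\|_2^2 \le \int_T|f|^2 \le K(d,\alpha_1,q_1)\,|T|\,\|\widehat f\|_{q_1}^{2-2d/(\alpha_1 q_1')}\||\omega-\overline\omega|^{\alpha_1}\widehat f\|_{q_1}^{2d/(\alpha_1 q_1')}.
\]
Solving each display for $|\Omega|$ and $|T|$ gives the two independent lower bounds.

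Finally I would multiply the two bounds and rewrite each factor $\|\cdot\|^{2-2d/(\alpha q')}$ as $\|\cdot\|^2/\|\cdot\|^{2d/(\alpha q')}$, which reproduces exactly the quotient in the statement, so that for every $\overline t,\overline\omega,q_j,\alpha_j$ one has $|T||\Omega|\ge (1-\varepsilon_T^2)(1-\varepsilon_\Omega^2)\cdot(\text{that quotient})$. Since the left-hand side is independent of these parameters, I pass to the supremum on the right to get $|T||\Omega|\ge (1-\varepsilon_T^2)(1-\varepsilon_\Omega^2)\,C_f$, and close with the elementary inequality $(1-\varepsilon_T^2)(1-\varepsilon_\Omega^2)\ge(1-\varepsilon_T-\varepsilon_\Omega)^2$, valid because $\varepsilon_T+\varepsilon_\Omega\le 1$: setting $u=\varepsilon_T+\varepsilon_\Omega$ and $v=\varepsilon_T\varepsilon_\Omega$, the difference equals $2v+v^2+2u(1-u)\ge 0$. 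I expect the main obstacle to be the bookkeeping of the Fourier-duality step on the $T$-side, namely correctly turning $\int_T|f|^2$ into a concentration integral for $\widehat f$ on $-T$ and inserting the center $\overline\omega$ by translation, together with checking that the exponents generated by Theorem \ref{GeneralizedPriceUP} assemble precisely into the quotient defining $C_f$.
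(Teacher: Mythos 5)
Your proposal is correct and follows essentially the same route as the paper: apply the generalized Price inequality \eqref{GeneralizedIneqLocUP} to a translate of $f$ to bound $\int_\Omega|\widehat f|^2$, interchange the roles of $f$ and $\widehat f$ (your explicit $\widehat{\widehat f}(\xi)=f(-\xi)$ computation on $-T$ is exactly the duality step the paper invokes in one line) to bound $\int_T|f|^2$, feed in the two concentration inequalities to get the separate lower bounds \eqref{separateT}--\eqref{separateomega}, multiply, and finish with $(1-\varepsilon_T^2)(1-\varepsilon_\Omega^2)\geqslant(1-\varepsilon_T-\varepsilon_\Omega)^2$ and the supremum over the parameters. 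Your algebraic verification of that last elementary inequality is also sound.
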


\begin{proof}
We can limit our attention to $f$ such that $C_f>0$, otherwise the result is trivial. The hypothesis $f,\widehat{f}\in L^1(\mathbb{R}^d)$ implies that $f,\widehat{f}\in L^\infty(\mathbb{R}^d)$, and so $f,\widehat{f}\in L^q(\mathbb{R}^d)$ for every $q\in [1,\infty]$.

Now, writing \eqref{GeneralizedIneqLocUP} for a translation by $\overline{t}$ of $f$, we have that the left-hand side does not change, since the Fourier transform turns translations into modulations. Moreover, in the right-hand side the only term that is affected by the translation is the last norm, and so we get the following more general estimate:
\begin{equation}\label{Price1}
\int_\Omega \vert\widehat{f}(\omega)\vert^2\,d\omega \leq K(d,\alpha,q) \vert \Omega\vert\Vert f\Vert_q^{2-2d/\alpha q^\prime} \Vert \vert t-\overline{t}\vert^\alpha f\Vert_q^{2d/\alpha q^\prime}.
\end{equation}
By interchanging the roles of $f$ and $\widehat{f}$ in \eqref{Price1}, we get
\begin{equation}\label{Price2}
\int_T \vert f(t)\vert^2\,dt \leq K(d,\alpha,q) \vert T\vert\Vert \widehat{f}\Vert_q^{2-2d/\alpha q^\prime} \Vert \vert \omega-\overline{\omega}\vert^\alpha \widehat{f}\Vert_q^{2d/\alpha q^\prime}.
\end{equation}
Observe now that, by the definition of $\varepsilon_T$-concentration
of $f$ on $T$, we have
\begin{equation}\label{concf}
\int_T \vert f(t)\vert^2\,dt = \Vert f\Vert_2^2-\int_{\mathbb{R}^d\setminus T}\vert
f(t)\vert^2\,dt\geqslant (1-\varepsilon_T^2)\Vert f\Vert_2^2,
\end{equation}
and analogously the hypothesis that $\widehat{f}$ is
$\varepsilon_\Omega$-concentrated on $\Omega$ can be rewritten as
\begin{equation}\label{conchatf}
\int_\Omega \vert \widehat{f}(\omega)\vert^2\,d\omega \geqslant
(1-\varepsilon_\Omega^2)\Vert f\Vert_2^2.
\end{equation}
Combining \eqref{concf} with \eqref{Price2} (with $\alpha_1$ and $q_1$ instead
of $\alpha$ and $q$, respectively) and \eqref{conchatf} with \eqref{Price1} (with
$\alpha_2$ and $q_2$ instead of $\alpha$ and $q$, respectively), we obtain
\begin{equation}\label{separateT}
\vert T\vert \geq (1-\varepsilon_T^2)\frac{\Vert
f\Vert_2^2 \Vert \widehat{f}\Vert_{q_1}^{2d/(\alpha_1 q_1^\prime)}}{K(d,\alpha_1,q_1) \Vert \widehat{f}\Vert_{q_1}^2 \Vert \vert \omega-\overline{\omega}\vert^{\alpha_1} \widehat{f}\Vert_{q_1}^{2d/(\alpha_1 q_1^\prime)}},
\end{equation}
\begin{equation}\label{separateomega}
\vert \Omega\vert \geq (1-\varepsilon_\Omega^2)\frac{\Vert
f\Vert_2^2 \Vert f\Vert_{q_2}^{2d/(\alpha_2 q_2^\prime)}}{K(d,\alpha_2,q_2) \Vert f\Vert_{q_2}^2 \Vert \vert t-\overline{t}\vert^{\alpha_2} f\Vert_{q_2}^{2d/(\alpha_2 q_2^\prime)}},
\end{equation}
Then, multiplying these last inequalities we get
\begin{equation}\label{strongerestimate}
\begin{split}
\vert
&T\vert\vert\Omega\vert\geq (1-\varepsilon_T^2)(1-\varepsilon_\Omega^2) \\
&\cdot\frac{\Vert
f\Vert_2^4 \Vert \widehat{f}\Vert_{q_1}^{2d/(\alpha_1 q_1^\prime)} \Vert f\Vert_{q_2}^{2d/(\alpha_2 q_2^\prime)}}{K(d,\alpha_1,q_1) K(d,\alpha_2,q_2) \Vert f\Vert_{q_2}^2 \Vert \widehat{f}\Vert_{q_1}^2 \Vert \vert t-\overline{t}\vert^{\alpha_2} f\Vert_{q_2}^{2d/(\alpha_2 q_2^\prime)} \Vert \vert \omega-\overline{\omega}\vert^{\alpha_1} \widehat{f}\Vert_{q_1}^{2d/(\alpha_1 q_1^\prime)}}.
\end{split}
\end{equation}
Observe that, since
$0\leqslant\varepsilon_T,\varepsilon_\Omega\leqslant 1$ and
$\varepsilon_T+\varepsilon_\Omega\leqslant 1$, we have
$(1-\varepsilon_T^2)(1-\varepsilon_\Omega^2)\geqslant
(1-\varepsilon_T-\varepsilon_\Omega)^2$. Then the conclusion follows
from \eqref{strongerestimate}, by taking the supremum over
$\overline{t}$, $\overline{\omega}$, $\alpha_1$, $\alpha_2$, $q_1$ and $q_2$ in
the right-hand side.
\end{proof}

We compare now the result of Theorem \ref{DSfdepending} with the
classical formulation of the Donoho-Stark UP.

\begin{rem}\label{strongerresult}
We observe at first that the statement of Theorem \ref{DSfdepending} is given in a parallel way to the classical one, but in the proof we have proved a stronger result. In fact, the local UP by Price gives estimates separately on the amount of energy of $f$ and $\widehat{f}$ in $T$ and $\Omega$, respectively. So, under the hypotheses of Theorem \ref{DSfdepending} we have deduced \eqref{separateT} and \eqref{separateomega}, that contain lower bounds for the measures of $T$ and $\Omega$ separately. This gives more information than a lower bound of the product between them.
\end{rem}

In order to make a further comparison between \eqref{DSestimate} and Donoho-Stark UP, we observe that in \eqref{DSestimate} we have a constant depending on the function $f$. A very natural question is if for some $f$ the result of Theorem \ref{DSfdepending} is stronger than Proposition \ref{ourDS} (and then stronger than the classical Donoho-Stark UP), or even stronger than any other estimate of the kind of \eqref{DSestimate} with a constant that does not depend on $f$ (for example, this would happen if we find a sequence $f_n$ for which the corresponding $C_{f_n}$ tends to infinity). This seems a not trivial question, and is postponed to a further study; however, since Theorem \ref{DSfdepending} is based on the local estimates of Price, and such estimates are optimal, our feeling is that Theorem \ref{DSfdepending} can give better estimates for some functions $f$.

As particular case of Theorem \ref{DSfdepending} if $d=1$, $q_1=q_2=2$, $\alpha_1=\alpha_2=1$ and $f$ satisfies the hypotheses of Theorem
\ref{DSfdepending} we have the inequality
\begin{equation}\label{Delta}
\Delta f \Delta \widehat f \geqslant
\frac{(1-\varepsilon_T^2)(1-\varepsilon_\Omega^2)}{4\pi^2|T||\Omega|}\|f\|_2^2
\end{equation}
with $\Delta f = \left(\int |t|^2 |f(t)|^2\, dt \right)^{1/2}$ and
analogous definition for $\widehat f$. This can be viewed as an
$\varepsilon$-concentration version of the classical Heisenberg UP
which states that
\begin{equation}\label{HUP}
\Delta f \Delta \widehat f \geqslant
\frac{\|f\|_2^2}{4\pi}
\end{equation}
for every $f\in L^2(\mathbb{R})$. Inequality \eqref{Delta} is
clearly of any interest only for the cases where the bound on its
right-hand side exceeds the one of the classical Heisenberg UP
\eqref{HUP}. This happens if $|T||\Omega|\leqslant
\frac{1}{\pi}(1-\varepsilon_T^2)(1-\varepsilon_\Omega^2)$; On the other hand,
from the improved lower bound \eqref{improvedDS} of the Donoho-Stark
UP, we know that $4(1-\varepsilon_T-\varepsilon_\Omega)^2\leqslant
|T| |\Omega|$. An elementary calculation shows that actually there
exists values $\varepsilon_T,\varepsilon_\Omega$ and sets $T,\Omega$
compatible with both conditions.

We end this section by presenting another consequence of the local
UP of Price, in the form of a ``mixed'' UP that relates the measure
of the support of a function with the concentration of its Fourier
transform.

\begin{thm}\label{mixed}
Let $f\in L^2(\mathbb{R}^d)$, $\alpha>d/2$, and $\overline{t},\overline{\omega}\in\mathbb{R}^d$. We have
\begin{equation}\label{suppvar1}
\left\vert\supp f\right\vert \Vert\vert\omega-\overline{\omega}\vert^\alpha\widehat{f}\Vert_2^{d/\alpha}>\frac{1}{K}\Vert f\Vert_2^{d/\alpha},
\end{equation}
and
\begin{equation}\label{suppvar2}
\big\vert\supp \hat{f}\big\vert \Vert\vert t-\overline{t}\vert^\alpha f\Vert_2^{d/\alpha}>\frac{1}{K}\Vert f\Vert_2^{d/\alpha},
\end{equation}
where $K=K(d,\alpha,2)$.
\end{thm}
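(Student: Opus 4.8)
The plan is to derive both inequalities directly from Price's local uncertainty principle (Theorem~\ref{PriceUP}) with $q=2$, exploiting two facts: that Price's inequality is \emph{strict} (equality is never attained), and that its constant $K_1(d,\alpha)$ coincides with $K(d,\alpha,2)$. The latter is a routine check: substituting $q=q'=2$ into \eqref{KTilde} and using $B(x,1-x)=\Gamma(x)\Gamma(1-x)$, one recovers exactly the expression \eqref{K1} for $K_1$. The central idea is to choose the measurable set in Price's estimate to be the support of the function being localized; since a function vanishes off its support, the localized integral then becomes a \emph{full} $L^2$ norm, which Plancherel lets us rewrite in terms of $\Vert f\Vert_2$.

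For \eqref{suppvar2} I would apply \eqref{IneqLocUP} with $\Omega=\supp\widehat{f}$, first replacing $f$ by its translate $t\mapsto f(t+\overline{t})$. This translation leaves $\supp\widehat{f}$ and $\Vert f\Vert_2$ unchanged (it only modulates $\widehat{f}$) and turns $\Vert\vert t\vert^\alpha f\Vert_2$ into $\Vert\vert t-\overline{t}\vert^\alpha f\Vert_2$, yielding the shifted form already recorded as \eqref{Price1}. With this choice of $\Omega$ the left-hand side equals $\int_{\supp\widehat{f}}\vert\widehat{f}\vert^2=\Vert\widehat{f}\Vert_2^2=\Vert f\Vert_2^2$, so Price gives
\[
\Vert f\Vert_2^2 < K\,\vert\supp\widehat{f}\vert\,\Vert f\Vert_2^{2-d/\alpha}\,\Vert\vert t-\overline{t}\vert^\alpha f\Vert_2^{d/\alpha}.
\]
Dividing by $K\Vert f\Vert_2^{2-d/\alpha}$ (finite, and nonzero for $f\neq0$) produces \eqref{suppvar2}. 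Inequality \eqref{suppvar1} is obtained symmetrically: I would interchange the roles of $f$ and $\widehat{f}$ exactly as in the passage from \eqref{Price1} to \eqref{Price2} — but carrying the \emph{strict} inequality of Theorem~\ref{PriceUP} rather than the non-strict generalized version — introduce the frequency shift $\overline{\omega}$ by modulating $f$ (which fixes $\supp f$ and $\Vert f\Vert_2$), and then take $T=\supp f$ so that $\int_{\supp f}\vert f\vert^2=\Vert f\Vert_2^2$.

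I do not expect a genuine obstacle here, as the argument is essentially a specialization of Price's theorem; the points requiring care are bookkeeping. First, the constant identification $K(d,\alpha,2)=K_1$ must be confirmed so that the $K$ of the statement is correct and the strict sign $>$ is justified. Second, one must check that the translation and modulation used to insert $\overline{t}$ and $\overline{\omega}$ indeed leave both the localized integral and $\Vert f\Vert_2$ invariant while shifting only the weight. Finally, the degenerate cases in which $\vert\supp f\vert$ or the weighted norm is infinite should be noted, as there the stated inequality holds trivially; all remaining manipulations are routine rearrangements.
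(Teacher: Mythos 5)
Your proposal is correct and follows essentially the same route as the paper: specialize the (translated/modulated) Price inequalities \eqref{Price1} and \eqref{Price2} to $q=2$ with the localization set equal to $\supp\hat f$ or $\supp f$, so that the localized integral becomes $\Vert f\Vert_2^2$ by Plancherel, and then divide. Your extra care about the strictness of the inequality (inherited from Theorem \ref{PriceUP} rather than the non-strict generalized version) and the identification $K(d,\alpha,2)=K_1(d,\alpha)$ is a welcome tightening of a point the paper passes over silently.
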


\begin{proof}
The estimates are not trivial only for functions $f$ such that one between $f$ and $\widehat{f}$ has support with finite measure. Suppose that $\left\vert\supp f\right\vert$ is finite. By \eqref{Price2}, with $q=2$ and $T=\supp f$, we obtain
$$
\Vert f\Vert_2^2 = \int_{\supp f}\vert f(t)\vert^2\,dt<K\left\vert\supp f\right\vert \Vert f\Vert_2^{2-d/\alpha}\Vert\vert\omega-\overline{\omega}\vert^\alpha\widehat{f}\Vert_2^{d/\alpha},
$$
that is \eqref{suppvar1}. The inequality \eqref{suppvar2} can be proved in the same way, by using \eqref{Price1} with $\Omega=\supp\hat{f}$ and $q=2$.

\end{proof}

\end{document}